\newtheorem{theorem}{Theorem}[section]
\newtheorem{proposition}[theorem]{Proposition}
\theoremstyle{definition}
\newtheorem{definition}[theorem]{Definition}
\theoremstyle{remark}
\newtheorem{remark}[theorem]{Remark}
\numberwithin{equation}{section}
\numberwithin{equation}{section}
\begin{document}
\begin{frontmatter}
 	
 	
 	
 	\title{Algebraic topological techniques for elliptic problems involving fractional Laplacian}
  \author{A. Panda\fnref{label3}}
  \ead{akasmika44@gmail.com}
  	\author{D. Choudhuri\corref{cor1}\fnref{label3}}
  \ead{dc.iit12@gmail.com}
 	\author{A. Bahrouni\fnref{label2}}
 	\ead{bahrounianouar@yahoo.fr}
 
 	%
 	\cortext[cor1]{Corresponding author}
 	\fntext[label2]{Department of Mathematics, National Institute of Technology Rourkela}
 	\fntext[label3]{University of Monastir}
\begin{abstract}
	We prove the existence of infinitely many solutions to an elliptic problem by borrowing the techniques from algebraic topology. The solution(s) thus obtained will also be proved to be bounded.
	\begin{flushleft}
		{\bf Keywords}:~  Fractional Laplacian, Fractional Sobolev Space, homology group.\\
		{\bf AMS Classification}:~35R11, 35J75, 35J60, 46E35.
	\end{flushleft}
\end{abstract}
\end{frontmatter}
\section{Introduction}\label{Introduction}
We propose to study the following singular problem with a mixed operator:
\begin{align}
	\begin{split}\label{main prob}
		a(-\Delta)^su+b(-\Delta)u&=\lambda |u|^{-\gamma-1}u+\mu|u|^{2_s^*-2}u,~\text{in}~\Omega,\\
		u&=0,~\text{in}~\mathbb{R}^N\setminus\Omega,
	\end{split}
\end{align}
where $\Omega\subset\mathbb{R}^N (N\geq 2)$ is a bounded domain, $a,b\geq 0$, $0<s<1<2<2_s^*< 2^*$, $\lambda>0$, $\mu\in\mathbb{R}$, $\gamma\in (0,1)$,
and 
$$(-\Delta)^s u(x)=P.V.\int_{\mathbb{R}^N}\frac{(u(x)-u(y))}{|x-y|^{N+2s}}dy,~\forall ~x\in\Omega,$$
is the fractional Laplacian. We refer the operator $``a(-\Delta)^s+b(-\Delta)"$ as a mixed operator since it possesses both local as well as nonlocal features, and thus we refer the problems of kind $\eqref{main prob}$ as ``nonlocal-local'' elliptic problems. For a detailed study on this operator refer \cite{Biagi} and \cite{Dipierro}. The mixed operators with different order are nowadays gaining popularity in applied sciences, in theoretical studies and also in real world applications. The development of the literature includes viscosity solution methods \cite{Barles}, Cahn-Hilliard equations \cite{Caffarelli}, Aubry-Mather theory \cite{de la}, phase transitions \cite{Cabre}, probability and stochastics \cite{Mimica},  fractional damping effects \cite{Dell}, decay estimates for parabolic equations \cite{Dipierro1}, population dynamics \cite{Dipierro}, Bernstein-type regularity results \cite{Cabre1}.\\
We begin by considering the following two spaces $X$ and $Y$ which are the closures of $C_c^\infty(\Omega)$ in $H^{s}(\Omega)$ and $H^1(\Omega)$, respectively (refer Section $\ref{functional setting}$ for these notations). Let $Q=\mathbb{R}^{2N}\setminus((\mathbb{R}^N\setminus\Omega)\times(\mathbb{R}^N\setminus\Omega))$ and define the spaces as follows:
\begin{eqnarray}
X&=&\left\{u:\Omega\rightarrow\mathbb{R}:u~\text{is measurable and}~\iint_{Q}\frac{|u(x)-u(y)|^{2}}{|x-y|^{N+2s}}dxdy<\infty\right\}\nonumber\\
Y&=&\left\{u\in L^2(\Omega):|\nabla u|\in L^2(\Omega)\right\}.\nonumber
\end{eqnarray} 
The spaces $X,
Y$ are Banach spaces with respect to the following norms: 
\begin{align}
\begin{split}\label{nl_norm}
\|u\|_X&=\|u\|_2+\left(\iint_{Q}\frac{|u(x)-u(y)|^{2}}{|x-y|^{N+2s}}dydx\right)^{\frac{1}{2}}\\
&=\|u\|_2+[u]_{s,2},
\end{split}
\end{align}
and
\begin{align}
\begin{split}\label{l_norm}
\|u\|_Y&=\|u\|_2+\left(\int_{\Omega}|\nabla u|^2dx\right)^{\frac{1}{2}}\\
&=\|u\|_2+\|u\|_{1,2},
\end{split}
\end{align}
respectively. The norm $[\cdot]$ defined in \eqref{nl_norm} is the Gagliardo norm. Note that here $\|u\|_{\alpha}=(\int_{\Omega}|u|^{\alpha}dx)^{\frac{1}{\alpha}}$ for $1<\alpha<\infty$.\\
The study of the nonlocal-local elliptic problem in \eqref{main prob}, firstly directed our attention to fix a function space in which the solution(s) will be seeked for. Define the space $Z=\{u:a[u]_{s,2}^2+b\|u\|_{1,2}^2<\infty\}$ equipped with the following norm:
\begin{eqnarray}\label{norm}
\|u\|&=&(a[u]_{s,2}^2+b\|u\|_{1,2}^2)^{\frac{1}{2}}.
\end{eqnarray}
It is easy to see that the space $Z$ is a Banach space and also reflexive with respect to the norm $\|\cdot\|$, defined in $\eqref{norm}$.\\
The use of algebraic topological techniques to study problems having a singular nonlinearity is a rarity in the literature. Thus, the problem discussed here is new as the consideration of a singularity with a critical exponent with $\mu\in\mathbb{R}$ handled with Morse theoretic approach is not found anywhere in the literature to our knowledge. The question about the existence and multiplicity of positive weak solutions to problem $\eqref{main prob}$ with $\mu>0$ and with either $a=0$ or $b=0$ has been answered in \cite{Ghanmi,Ghosh,Giacomoni 2,Haitao,Hirano,Mukherjee} and the references therein. The authors of these works followed different tools such as variational method, concentration
compactness method, Nehari manifold method etc. but none of them used the techniques from algebraic topology to study $\eqref{main prob}$ with $a,b>0$ and $\mu\in \mathbb{R}$. \\ 
With the help of algebraic topological techniques, the existence and multiplicity results for the following problem have been established by many researchers under different growth conditions on the reaction term $f$ (either subcritical or critical growth conditions): 
\begin{align}
\begin{split}\label{prob s=1}
\mathcal{L}u&=f(x,u),~\text{in}~\Omega,\\
u&=0,~\text{on}~\partial\Omega.
\end{split}
\end{align}
 Problems of type $\eqref{prob s=1}$ were treated recently by Iannizzotto et al. in \cite{Iannizzotto} (finite multiplicity with $\mathcal{L}$ being the fractional $p$-Laplacian), Ferrara et al. in \cite{Ferrara} (at least one  non-trivial solution with $\mathcal{L}$ being a fractional integro-differential operator), D. Choudhuri in \cite{choudhuri} (infinitely many solutions with $\mathcal{L}$ being the $p(x)$-Laplacian) and the references therein. The paper by Papageorgiou \& R\u{a}dulescu \cite{papa2}, dealt with a nonlinear Robin problem and proved the multiplicity by producing three nontrivial solutions. The techniques thus differed from problem-to-problem addressed. \\
 The double phase problems of type $\eqref{prob s=1}$ with $\mathcal{L}$ being a $(p,q)$-Laplacian or a fractional $(p,q)$-Laplacian have been widely studied by many authors with different techniques. For instance, when $\mathcal{L}=(-\Delta_p-\Delta_q)$ with $p,q>1$, Gongbao \& Gao \cite{Gongbao}, Yin \& Yang \cite{Yin} used variational method, Liang et al. \cite{Liang} used Morse theoretical technique for the case $q=2\neq p$, and Marano et al. \cite{Marano}, Mugani \& Papageorgiou \cite{Mugani}  used the variational method with Morse theory and truncation comparison techniques. When $\mathcal{L}=((-\Delta_p)^s+(-\Delta_q)^s)$ with $p,q>1$ and $s\in (0,1)$, the problem $\eqref{prob s=1}$ has been discussed in \cite{Ambrosio,Bhakta,Goel,Isernia} using variational methods, and in \cite{Chen} using the Morse theory. With a combination of variational technique and Morse technique, N. S. Papageorgiou, V. D. R\u{a}dulescu, D. D. Repov\v{s} in \cite{papa1} established the existence (for $(p-1)$-superlinear case) and multiplicity (for $(p-1)$-linear resonant case) for Robin problems with $(p,q)$-Laplacian. For more details on double phase problems one can refer the recent piece of works by A. Bahrouni, V. D. R\v{a}dulescu \& D. D. Repov\v{s} \cite{AB3,AB4} and the bibliography therein.\\
Motivated by the former works, in this article, we study the singular problem $\eqref{main prob}$ using variational techniques and algebraic topological methods, specifically the  Morse theory and the critical groups (refer Section $\ref{functional setting}$). We establish the existence of infinitely many solutions to $\eqref{main prob}$ in Section $\ref{main results}$ followed by two subsections. Subsection $\ref{existence}$ and Subsection $\ref{multiplicity}$ deal with the existence part and the multiplicity part, respectively. In the Appendix, we will establish the boundedness result of these weak solutions. The books by Perera et al. \cite{pererabook} and Papageorgiou et al. \cite{papa0} are strongly recommended for a better understanding on the usage of Morse theory in the study of various elliptic PDEs. 
\section{Mathematical preliminaries}\label{functional setting}
\noindent A quintessential condition which the functional requires to satisfy is the the Palais-Smale condition (denoted by $(PS)$-condition) which is as follows.
\begin{definition}
	Let $X$ be a Banach space, and $I:X\rightarrow\mathbb{R}$ be a $C^1(X,\mathbb{R})$ functional. Given $c\in\mathbb{R}$ we say that the functional $I$ satisfies the Palais-Smale condition (or the $(PS)_c$-condition) at level $c$ if any bounded sequence $(u_n)\subset X$ such that $I(u_n)\rightarrow c$, and $I'(u_n)\rightarrow 0$ as $n\rightarrow\infty$ has a convergent subsequence in $X$.
\end{definition}
\noindent Below are the Sobolev embedding results that will be used throughout the article.
\begin{theorem}\label{poin}[\cite{Valdinoci}]
	Let $\Omega\subset\mathbb{R}^N$ be a bounded domain, $0<s<1$, and $2s<N$. Further, assume that $r\leq 2_s^*=\frac{2N}{N-2s}$. Then, there exists $C=C(r,s,N,\Omega)>0$ such that 
	$$\|u\|_{L^{r}(\Omega)}\leq C\|u\|_{X},~\forall u\in X.$$
	Moreover, this embedding is continuous for any $r\in [1,2_s^*]$, and compact for any $r\in [1,2_s^*)$. The above embedding holds also for $Z$.
\end{theorem}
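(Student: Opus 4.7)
The statement is a version of the fractional Sobolev embedding theorem; my plan is to reduce the embedding on $\Omega$ to the known embedding on $\mathbb{R}^N$ via the zero extension, then degrade from the critical exponent down to every $r\in[1,2_s^*]$ by H\"older, and finally derive the analogous statement for $Z$ from a direct norm comparison. More precisely, since elements of $X$ (the closure of $C_c^\infty(\Omega)$ in $H^s(\Omega)$) can be extended by zero to an element $\tilde u$ of $H^s(\mathbb{R}^N)$ with $[\tilde u]_{s,2}^2=\iint_{\mathbb{R}^{2N}}|\tilde u(x)-\tilde u(y)|^2|x-y|^{-N-2s}\,dxdy=\iint_Q|u(x)-u(y)|^2|x-y|^{-N-2s}\,dxdy$, the first step is to invoke the classical fractional Sobolev inequality on $\mathbb{R}^N$ to obtain $\|\tilde u\|_{L^{2_s^*}(\mathbb{R}^N)}\le C_{N,s}[\tilde u]_{s,2}$, which immediately yields the critical embedding $\|u\|_{L^{2_s^*}(\Omega)}\le C\|u\|_X$.

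For the subcritical range $r\in[1,2_s^*]$, since $\Omega$ is bounded, H\"older's inequality with exponent $2_s^*/r\geq 1$ gives
\begin{equation*}
\|u\|_{L^r(\Omega)}\le|\Omega|^{\frac{1}{r}-\frac{1}{2_s^*}}\|u\|_{L^{2_s^*}(\Omega)}\le C(r,s,N,\Omega)\|u\|_X,
\end{equation*}
establishing continuity of the embedding for all $r\in[1,2_s^*]$. For compactness when $r\in[1,2_s^*)$, I would argue through the Fr\'echet–Kolmogorov–Riesz criterion: given a bounded sequence $(u_n)\subset X$, the uniform control $\sup_n[u_n]_{s,2}<\infty$ yields equi-integrability of the translates (quantified by the Gagliardo seminorm controlling $L^2$-modulus of continuity), while the boundedness of $\Omega$ gives tightness. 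This produces an $L^2(\Omega)$-convergent subsequence, and an interpolation $\|u_n-u\|_{L^r}\le\|u_n-u\|_{L^2}^{\theta}\|u_n-u\|_{L^{2_s^*}}^{1-\theta}$ for an appropriate $\theta\in(0,1]$ upgrades convergence to every $r\in[1,2_s^*)$.

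Finally, for the space $Z$ with norm $\|u\|=(a[u]_{s,2}^2+b\|u\|_{1,2}^2)^{1/2}$, I distinguish two cases according to which of $a,b$ is positive. If $a>0$, then $\|u\|\ge\sqrt{a}\,[u]_{s,2}$, so the inclusion $Z\hookrightarrow X$ is continuous and the conclusion transfers from $X$; if $b>0$, then $\|u\|\ge\sqrt{b}\,\|u\|_{1,2}$, and the classical Sobolev embedding $H^1_0(\Omega)\hookrightarrow L^{2^*}(\Omega)$ together with $2_s^*<2^*$ and H\"older (plus the compact Rellich–Kondrachov theorem for the subcritical range) furnishes the same continuous/compact statement.

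The main obstacle is the critical-exponent step: establishing $\|u\|_{L^{2_s^*}(\Omega)}\le C\|u\|_X$ is precisely the sharp fractional Gagliardo–Nirenberg–Sobolev inequality, which is nontrivial to prove from scratch. All remaining steps are either H\"older/interpolation or a Kolmogorov–Riesz compactness argument, both of which are routine once the critical embedding is in hand; consequently, following the convention of the cited reference, I would invoke the critical inequality as a black box rather than reproving it.
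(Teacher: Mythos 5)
The paper does not prove this statement; it imports it directly from the cited reference \cite{Valdinoci} (the Di Nezza--Palatucci--Valdinoci Hitchhiker's guide), so there is no internal proof to compare against. Your reconstruction follows exactly the route taken in that reference and in standard texts: zero extension to $\mathbb{R}^N$ (which is exactly why the Gagliardo integral is taken over $Q$ rather than $\Omega\times\Omega$), the sharp fractional Sobolev inequality at the critical exponent, H\"older on the bounded domain to cover $r\in[1,2_s^*]$, a Kolmogorov--Riesz/Fr\'echet argument for compactness when $r<2_s^*$, and a norm comparison for $Z$; it is correct, with the only small omission being that the inclusion $Z\hookrightarrow X$ when $a>0$ (resp.\ $Z\hookrightarrow Y$ when $b>0$) also uses the fractional (resp.\ classical) Poincar\'e inequality to absorb the $L^2$ term in $\|\cdot\|_X$ (resp.\ $\|\cdot\|_Y$), which you leave implicit.
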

\begin{theorem}[\cite{Valdinoci}]
	Let $\Omega\subset\mathbb{R}^N$ be a bounded domain, and $N>2$. Then, for every $\bar{r}\leq 2^*=\frac{2N}{N-2}$ there exists $\bar{C}=\bar{C}(r,N,\Omega)>0$ such that 
	$$\|u\|_{L^{\bar{r}}(\Omega)}\leq \bar{C}\|u\|_{Y},~\forall u\in Y.$$
	Moreover, this embedding is continuous for any $r\in [1,2^*]$, and compact for any $r\in [1,2^*)$. The above embedding holds also for $Z$.
\end{theorem}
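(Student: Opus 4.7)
The plan is to reduce this to the classical Sobolev and Rellich--Kondrachov theorems for $H^1_0(\Omega)$ and then transfer the conclusion to $Z$ by comparing norms. Since $Y$ is the closure of $C_c^\infty(\Omega)$ in $H^1(\Omega)$, it coincides with $H^1_0(\Omega)$, so all the classical tools for functions with zero trace are available.

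\textbf{Step 1 (continuous embedding).} For $u\in C_c^\infty(\Omega)$, I would invoke the Gagliardo--Nirenberg--Sobolev inequality
\[
\|u\|_{L^{2^*}(\mathbb{R}^N)}\leq C(N)\,\|\nabla u\|_{L^2(\mathbb{R}^N)},
\]
valid whenever $N>2$. Extending $u$ by zero outside $\Omega$ and using the definition of $\|\cdot\|_Y$ gives $\|u\|_{L^{2^*}(\Omega)}\leq C(N)\|u\|_Y$. A density argument then carries this estimate from $C_c^\infty(\Omega)$ to all of $Y$. For any $\bar r\in[1,2^*]$ I would use H\"older's inequality on the bounded domain $\Omega$ to interpolate between $L^1$ and $L^{2^*}$, producing the constant $\bar C=\bar C(\bar r,N,\Omega)$ with $\|u\|_{L^{\bar r}(\Omega)}\leq \bar C\,\|u\|_Y$.

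\textbf{Step 2 (compactness).} For $\bar r\in[1,2^*)$ I would directly cite the classical Rellich--Kondrachov compactness theorem: bounded sets in $H^1_0(\Omega)$ are precompact in $L^{\bar r}(\Omega)$. Since $Y=H^1_0(\Omega)$, this yields the compactness half of the statement with no additional work beyond the identification of spaces.

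\textbf{Step 3 (transfer to $Z$).} Here I would compare norms. By definition, $\|u\|^{2}=a[u]_{s,2}^{2}+b\|u\|_{1,2}^{2}$, so $\|u\|^{2}\geq b\,\|\nabla u\|_{L^2(\Omega)}^{2}$. Because $Z$ consists of functions obtained as limits from $C_c^\infty(\Omega)$, the Poincar\'e inequality $\|u\|_{L^2(\Omega)}\leq C_P\|\nabla u\|_{L^2(\Omega)}$ applies, which together with $b>0$ yields
\[
\|u\|_Y=\|u\|_{L^2(\Omega)}+\|\nabla u\|_{L^2(\Omega)}\leq C\,\|u\|.
\]
Composing this inclusion $Z\hookrightarrow Y$ with the continuous (resp.\ compact) embedding $Y\hookrightarrow L^{\bar r}(\Omega)$ proved in Steps~1 and~2 gives the required continuous (resp.\ compact) embedding $Z\hookrightarrow L^{\bar r}(\Omega)$.

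\textbf{Main obstacle.} The analytic content is entirely classical, so the only genuine subtlety is the transfer to $Z$: one must be sure that the norm $\|\cdot\|$ controls $\|\cdot\|_Y$, which requires $b>0$ (the standing hypothesis implicit in the problem being genuinely mixed) and the use of the Poincar\'e inequality, which in turn relies on the interpretation of $Z$ as a closure of compactly supported test functions. Once this norm comparison is in hand, the embedding for $Z$ is a corollary and no new analysis is needed.
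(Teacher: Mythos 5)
The paper does not prove this theorem; it is quoted from the cited reference \cite{Valdinoci} and used as a black box, so there is no internal argument to compare against. Your proof is the standard textbook derivation (Gagliardo--Nirenberg--Sobolev plus density for continuity, H\"older interpolation on the bounded domain for $\bar r<2^*$, Rellich--Kondrachov for compactness, and a norm comparison to transfer to $Z$), and it is correct.

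One point worth flagging, because it touches the paper's standing hypotheses: your transfer to $Z$ in Step~3 genuinely requires $b>0$, since $\|u\|^2 = a[u]_{s,2}^2 + b\|u\|_{1,2}^2 \geq b\|\nabla u\|_{L^2}^2$ is vacuous when $b=0$. The paper only assumes $a,b\geq 0$, so as literally stated the final sentence ``The above embedding holds also for $Z$'' fails if $b=0$ (and $2^*>2_s^*$). You correctly isolate this; it is an implicit assumption the paper relies on whenever the problem is genuinely mixed. A second minor remark: you justify the Poincar\'e inequality on $Z$ by saying $Z$ ``consists of functions obtained as limits from $C_c^\infty(\Omega)$,'' which is the right reading given the Dirichlet condition in \eqref{main prob}, but the paper's displayed definition of $Z$ only records finiteness of the norm and does not build in the zero extension explicitly; your interpretation is the one that makes the claim true.
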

\noindent We now present the fundamental tool that will be used to work with, namely the {\it homology theory} \cite{papa0}, which will be followed by the definition of {\it deformation} (see \cite{papa0}). 
\begin{definition}\label{defn1}
	A ``homology theory" on a family of pairs of spaces $(X,A)$ consists of:
	\begin{enumerate}
		\item A sequence $\{H_{k}(X,A)\}_{k\in \mathbb{N}_0}$  of abelian groups known as ``homology group'' for the pair $(X, A)$ (note that for the pair $(X,\phi)$, we write $H_k(X), k \in \mathbb{N}_0$).
		\item To every map of pairs $\varphi :(X,A)\rightarrow(Y,B)$ is associated a homomorphism
		$\varphi^{*} : H_k(X,A)\rightarrow H_k(Y,B)$ for all $k \in \mathbb{N}_0$.
		\item 	To every $k \in \mathbb{N}_0$ and every pair $(X,A)$ is associated a homomorphism
		$\partial : H_k(X,A)\rightarrow H_{k-1}(A)$ for all $k \in \mathbb{N}_0$.
	\end{enumerate}
	Here, $\mathbb{N}_0=\mathbb{N}\cup\{0\}$. 
	These objects satisfy the following axioms: 
	\begin{description}
		\item[($A_1$)] If $\varphi=id_{X}$, then $\varphi_{*}=id|_{H_k(X,A)}$.
		\item[($A_2$)] If $\varphi:(X,A)\rightarrow (Y,B)$, and $\psi:(Y,B)\rightarrow (Z,C)$ are maps of pairs, then $(\psi\circ\varphi)_{*}=\psi_{*}\circ\varphi_{*}$.
		\item[($A_3$)] If $\varphi:(X,A)\rightarrow(Y,B)$ is a map of pairs, then $\partial\circ\varphi_{*}=(\varphi|_{A})_{*}\circ\partial$.
		\item[($A_4$)] If $i:A\rightarrow X$ and $j:(X,\phi)\rightarrow(X,A)$ are inclusion maps, then the following sequence is exact
		$$...\xrightarrow[]{\partial} H_{k}(A)\xrightarrow[]{i_{*}} H_k(X)\xrightarrow[]{j_{*}} H_k(X,A)\xrightarrow[]{\partial} H_{k-1}(A)\rightarrow...$$
		Recall that a chain $...\xrightarrow[]{\partial_{K+1}} C_{k}(X)\xrightarrow[]{\partial_{k}} C_{K-1}(X)\xrightarrow[]{\partial_{k-1}} C_{k-2}(X)\xrightarrow[]{\partial_{k-2}} ...$ is said to be exact if $im(\partial_{k+1})=ker(\partial_k)$ for each $k\in\mathbb{N}_0$.
		\item[($A_5$)] If $\varphi, \psi:(X,A)\rightarrow(Y,B)$ are homotopic maps of pairs, then $\varphi_{*}=\psi_{*}$.
		\item[($A_6$)] (Excision): If $U\subseteq X$ is an open set with $\bar{U}\subseteq \text{int}(A)$, and $i:(X\setminus U,A\setminus U)\rightarrow (X,A)$ is the inclusion map, then $i_{*}:H_{k}(X\setminus U,A\setminus U)\rightarrow H_k(X,A)$ is an isomorphism.
		\item[($A_7$)] If $X=\{*\}$, then $H_k({*})=0$ for all $k\in\mathbb{N}$.
	\end{description}
\end{definition}
\begin{definition}\label{deformation}
	A continuous map $F: X\times [0,1]\to X$ is a deformation retraction of a space $X$ onto a subspace $A$ if, for every $x \in X$ and $a \in A$, $F(x,0)=x$, $F(x,1)\in A$, and $F(a,1)=a$.
\end{definition}
\noindent An important result in Morse theory is stated below.
\begin{theorem}\label{imp_lem_MT}
	Let $I\in C^2(X)$ satisfy the Palais-Smale condition, and let `$a$' be a regular value of $I$. Then, $H_*(X,I^a)\neq 0$, implies that $K_{I}\cap I^a\neq \emptyset$ where $$K_{I}=\{u\in X: I'(u)=0\},$$ and $$I^a=\{u\in X: I(u)\leq a\}.$$
\end{theorem}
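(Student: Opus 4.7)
The plan is to argue by contrapositive: assume $K_I\cap I^a=\emptyset$ and construct a strong deformation retraction of $X$ onto $I^a$, which by the homotopy axiom $(A_5)$ of Definition \ref{defn1} would force $H_k(X,I^a)=0$ for every $k\in\mathbb{N}_0$, contradicting the hypothesis. Throughout I will work with the negative pseudo-gradient flow of $I$ and exploit that $a$ is a regular value, so that $I^{-1}(a)$ is a $C^2$ hypersurface crossed transversally by the flow lines.

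First, I would fix a locally Lipschitz pseudo-gradient vector field $V$ on $X\setminus K_I$, available because $I\in C^2(X)$, satisfying the standard inequalities $\|V(u)\|\le 2\|I'(u)\|$ and $\langle I'(u),V(u)\rangle\ge \|I'(u)\|^2$. Let $\eta_t$ denote the negative flow $\dot\eta_t=-V(\eta_t)$, reparametrised if necessary near $K_I$ so that every maximal orbit is defined on $[0,\infty)$. Since $K_I\cap I^a=\emptyset$ by assumption and $a$ is a regular value of $I$, the flow is smooth in a neighbourhood of $I^{-1}(a)$ and $I$ is strictly decreasing along every orbit off the critical set.

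Next, for each $u\in X$ define the hitting time $\tau(u)=\inf\{t\ge 0:\eta_t(u)\in I^a\}$, with $\tau(u)=0$ whenever $u\in I^a$. The candidate deformation is $F:X\times[0,1]\to X$, $F(u,s)=\eta_{s\tau(u)}(u)$; by construction $F(u,0)=u$, $F(u,1)\in I^a$, and $F(u,1)=u$ for every $u\in I^a$. Continuity of $\tau$ at points where $\eta_{\tau(u)}(u)\in I^{-1}(a)$ follows from the implicit function theorem applied to $I\circ\eta$, using the transversality guaranteed by the regular-value hypothesis. Hence $F$ is a strong deformation retraction in the sense of Definition \ref{deformation}, and the desired vanishing $H_*(X,I^a)=0$ is forced.

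The main obstacle is establishing $\tau(u)<\infty$ for every $u\in X$, i.e.\ ruling out orbits trapped in $\{I>a\}$ forever. Along any such trapped orbit the energy identity $\int_0^\infty \|I'(\eta_t(u))\|^2\,dt\le I(u)-a$ produces a sequence $t_n\to\infty$ with $\|I'(\eta_{t_n}(u))\|\to 0$, and the Palais-Smale condition forces a subsequential limit $u^*\in K_I$ with $I(u^*)\ge a$. Reconciling this with the standing hypothesis $K_I\cap I^a=\emptyset$ is the delicate point of the proof; it requires the full strength of the second deformation lemma as formulated in Papageorgiou et al.\ \cite{papa0}, where the pseudo-gradient is modified in neighbourhoods of any such $u^*$ so that orbits are pushed across $I^{-1}(a)$ in finite time without being absorbed by critical points above level $a$. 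This is the technical heart of the argument and is where the combination of $C^2$ regularity, PS, and the regular-value assumption is used in full.
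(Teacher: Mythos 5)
The paper itself gives no proof of Theorem \ref{imp_lem_MT}; it is quoted as a known preliminary from Morse theory, so your attempt can only be judged on its own merits. Your overall strategy --- argue by contrapositive, build a deformation retraction of $X$ onto $I^a$ along the negative pseudo-gradient flow, and invoke homotopy invariance $(A_5)$ to force $H_*(X,I^a)=0$ --- is the standard route to this type of statement, and the individual ingredients (pseudo-gradient field, hitting time $\tau(u)$, the energy identity $\int_0^\infty\|I'(\eta_t(u))\|^2\,dt\le I(u)-a$ along a trapped orbit, extraction of a PS-limit $u^*\in K_I$) are all correctly set up.

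The gap is exactly at the point you flag, and it cannot be repaired the way you suggest. From the hypothesis $K_I\cap I^a=\emptyset$ you only know there are no critical points at levels $\le a$; there may well be critical points at levels strictly above $a$ (for instance a local minimum at level $c>a$), and the negative flow starting near such a point converges to it and never reaches $I^a$, so $\tau(u)=+\infty$ and no deformation retraction of all of $X$ onto $I^a$ exists. Your proposed fix --- modifying the pseudo-gradient near $u^*$ so that orbits are ``pushed across $I^{-1}(a)$ without being absorbed by critical points above level $a$'' --- is not something the second deformation lemma provides: that lemma requires the absence of critical values in the \emph{entire} interval $(a,b)$, and one cannot push a flow line past a local minimum. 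Note also that your own PS-argument produces a critical point $u^*$ with $I(u^*)\ge a$ (in fact $>a$, since $a$ is regular), which is perfectly consistent with $K_I\cap I^a=\emptyset$, so no contradiction ever materializes. What your deformation argument actually proves is the contrapositive of ``$K_I\cap\{I\ge a\}=\emptyset\Rightarrow H_*(X,I^a)=0$'', i.e.\ that $H_*(X,I^a)\neq 0$ forces a critical point at level $\ge a$ --- which is the standard form of this theorem. As stated, with $I^a=\{I\le a\}$ in the conclusion, the result is in fact false ($X=\mathbb{R}$, $I(x)=x^2+2$, $a=1$: $a$ is a regular value, PS holds, $H_0(X,\emptyset)\neq 0$, yet the only critical point sits at level $2$). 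So the inequality in the conclusion must be reversed (or $I^a$ read as a superlevel set) before any proof along your lines can close.
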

\begin{remark}
	Another notation which will be used in the article is $$K_{I,D}=\{u\in X: I(u)\in D\}$$ where $D$ is a connected subset of $\mathbb{R}$.
\end{remark}
\begin{remark}\label{rem1}
	Prior to applying the Morse lemma we recall that for a Morse function the following holds:
	\begin{enumerate}
		\item $$H_{*}(I^c,I^c\setminus \text{Crit}(I,c))=\oplus_{j}H_*(I^c\cap N^j,(I^c\setminus\{x^j\})\cap N^j).$$
		\item \[H_{k}(I^c\cap N,I^c\setminus\{x\}\cap N)= \begin{cases}
		\mathbb{R}, & k=m(x) \\
		0, & \text{otherwise}
		\end{cases}\]
		where $m(x)$ is a Morse index of $x$, and $x$ is a critical point of $f$.\\
		\item Further $$H_k(I^a,I^b)=\oplus_{\{i:m(x_i)=k\}}\mathbb{Z}=\mathbb{Z}^{m_k(a,b)}$$ where $m_k(a,b)=n(\{i:m(x_i)=k,x_i\in K_{I,(a,b)}\})$. Here, $n(S)$ is the number of elements in the set $S$.
		\item Morse relation 
		$$\sum_{u\in K_{J,[a,b]}}\sum_{k\geq 0}dim C_k(I,u) t^k=\sum_{k\geq 0}dim H_k(J^{a},J^{b}) t^k+(1+t)Q_t$$ for all $t\in\mathbb{R}$.
	\end{enumerate}
\end{remark}
\noindent In this paper, we use the notion of local $(m, n)$-linking $(m,n\in \mathbb{N})$ (see Definition 2.3, \cite{papa1}) to prove the existence of solution.
\begin{definition}\label{linking_defn}
	Let $W$ be a Banach space, $I\in C^1(W,\mathbb{R})$, and $0$ an isolated critical point of $I$ with $I(0)=0$. Further, assume that $m,n\in \mathbb{N}$. We say that $I$ has a ``local $(m, n)$-linking'' near the origin if there exist a
	neighborhood U of 0, $E_0\neq\emptyset$, $E \subseteq U$, and $D\subseteq W$ such that $0\notin E_0\subseteq E$, $E_0\cap D=\emptyset$, and
	\begin{enumerate}
		\item 0 is the only critical point of $I$ in $I^0\cap U$, where $I^0=\{u\in W: I(u)\leq 0\}$,
		\item $Dim~im(i^*)-Dim~im(j^*)\geq n$, where
		$$i^*:H_{m-1}(E_0)\rightarrow H_{m-1}(W\setminus D)~\text{and}~j^*:H_{m-1}(E_0)\rightarrow H_{m-1}(E)$$
		are the homomorphisms induced by the inclusion maps $i: E_0\rightarrow W\setminus D$ and $j: E_0\rightarrow E$,
		\item $I|_{E}\leq 0\leq I|_{U\cap D\setminus\{0\}}$.
	\end{enumerate}
\end{definition}
\section{Proof of the main results}\label{main results}
\noindent Let $F(x,t)=\int_{0}^{t}f(s,x)ds$ be the primitive of $f(x,t)=\lambda|t|^{-\gamma-1}t+\mu |t|^{2_s^*-2}t$.\\
We say $u\in Z$ to be a weak solution of $\eqref{main prob}$ if for every $\varphi\in Z$, we have
\begin{align}
a\iint_{Q}\frac{(u(x)-u(y))}{|x-y|^{N+2s}}(\varphi(x)-\varphi(y)) dx dy\nonumber+b\int_{\Omega\time\Omega}\nabla u\cdot\nabla\varphi dx=&\int_{\Omega}\lambda|u|^{-\gamma-1}u\varphi(x)dx\nonumber\\&+\int_{\Omega}\mu |u|^{2_s^*-2}u\varphi(x)dx.\nonumber
\end{align}
Clearly, a weak solution to problem $\eqref{main prob}$ is a critical point of the corresponding energy functional
\begin{align}
\begin{split}\label{energy fnal}
I(u)=&\frac{a}{2}\iint_{Q}\frac{|u(x)-u(y)|^{2}}{|x-y|^{N+2s}}dxdy+\frac{b}{2}\int_{\Omega}|\nabla u|^2dx-\int_{\Omega}F(x,u)dx.
\end{split}
\end{align}
However, it is easy to see that the functional $I$ is not $C^1(Z)$ due to the presence of the singular term. Therefore, instead of working with the original functional $I$, we will use a cut-off functional, $\bar{I}$.
\subsection{Existence of solution using a local $(1,1)$ linking at 0}\label{existence}
We first define the functional $\bar{I}$. Let us consider the following cut-off function.
$$\xi(t)=\begin{cases}
1, ~\text{if}~ |t|\leq l\\
\xi ~\text{is decreassing, if}~ l\leq t\leq 2l\\
0,~\text{if}~ |t|\geq 2l.
\end{cases}$$
Let us now consider the following {\it cut-off} problem:
\begin{align}\label{main3}
\begin{split}
a(-\Delta)^su-b\Delta u&= \lambda\cfrac{u}{|u|^{\gamma+1}}+ \tilde{g}(u)~\text{in}~\Omega,\\
u&=0~\text{in}~\mathbb{R}^N\setminus\Omega,
\end{split}
\end{align}
where, $$\tilde{g}(u)=\mu|u|^{2_s^*-2}u\xi(\|u\|).$$
Let $\tilde{F}(x,t)=\int_{0}^{t}\tilde{f}(s,x)ds$ be the primitive of $\tilde{f}(x,t)=\lambda|t|^{-\gamma-1}t+\tilde{g}(t)$. We say $\tilde{u}\in Z$ to be a weak solution of $\eqref{main3}$ if for every $\varphi\in Z$, we have
\begin{align}
a\iint_{Q}\frac{(\tilde{u}(x)-\tilde{u}(y))}{|x-y|^{N+2s}}(\varphi(x)-\varphi(y)) dx dy\nonumber+b\int_{\Omega\time\Omega}\nabla \tilde{u}\cdot\nabla\varphi dx=&\int_{\Omega}\lambda|\tilde{u}|^{-\gamma-1}\tilde{u}\varphi(x)dx\nonumber\\&+\int_{\Omega}\mu \xi(\|\tilde{u}\|) |\tilde{u}|^{2_s^*-2}\tilde{u}\varphi(x)dx.\nonumber
\end{align}
Consequently, a weak solution to problem $\eqref{main3}$ is a critical point of the corresponding energy functional
\begin{align}
\begin{split}\label{energy3}
\bar{I}(u)=&\frac{a}{2}\iint_{Q}\frac{|u(x)-u(y)|^{2}}{|x-y|^{N+2s}}dxdy+\frac{b}{2}\int_{\Omega}|\nabla u|^2dx-\int_{\Omega}\tilde{F}(x,u)dx.
\end{split}
\end{align}
\begin{remark}\label{key_obs} One can easily see that if $u$ is a weak solution to \eqref{main3} with $\|u\|\leq l$, then $u$ is also a weak solution to \eqref{main prob}. \end{remark}
\begin{remark}\label{positivity}
It is easy to observe that $I(u)=I(|u|)$. Hence, if a solution exists it has to be nonnegative. Furthermore, due to the presence of the singular term the solution is forced to be positive a.e. in $\Omega$.
\end{remark}
\noindent We first verify that that functional $\bar{I}$ satisfies the (PS)- condition. Let us denote 
\begin{equation}\label{best}
S=\underset{u\in X\setminus\{0\}}{\inf}\frac{\|u\|_{X}^2}{\|u\|_{L^{2_s^*}(\Omega)}^2}
\end{equation}
which is the best Sobolev constant in the Sobolev embedding (Theorem $\ref{poin}$). 
\begin{theorem}
	The functional $\bar{I}$ satisfies the $(PS)_c$-condition for\\ $$c<c_*= \left(\frac{1}{2}-\frac{1}{2_s^*}\right)\mu^{\frac{-2}{2_s^*-2}}S^{\frac{2_s^*}{2_s^*-2}}-\left(\frac{1}{2}-\frac{1}{2_s^*}\right)^{-\frac{\gamma-1}{\gamma+1}}\left[|\Omega|^{\frac{2_s^*-1+\gamma}{2_s^*}}S^{-\frac{1-\gamma}{2}}\frac{\lambda}{1-\gamma}\right]^{\frac{2}{1+\gamma}},$$
	where $c_*>0$ for sufficiently small $\lambda,|\mu|>0$.
	\end{theorem}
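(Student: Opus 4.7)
The plan is to take a bounded Palais--Smale sequence $(u_n)\subset Z$ at level $c<c_*$, extract a weak limit which is a critical point of a limiting (``frozen'' cut-off) functional, and then prove that the remainder $w_n:=u_n-u$ converges to zero strongly by means of a Brezis--Lieb decomposition combined with the energy threshold $c<c_*$. By the reflexivity of $Z$ and Theorem~\ref{poin}, after passing to a subsequence I may assume $u_n\rightharpoonup u$ in $Z$, $u_n\to u$ in $L^{r}(\Omega)$ for every $r\in[1,2_s^*)$, $u_n\to u$ almost everywhere in $\Omega$, and, since $\|u_n\|$ is bounded, $\xi(\|u_n\|)\to \xi_\infty\in[0,1]$. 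Passing to the limit in $\bar{I}'(u_n)[\varphi]\to 0$ for $\varphi\in C_c^\infty(\Omega)$ then shows that $u$ is a weak solution of \eqref{main3} with $\xi(\|u\|)$ replaced by the constant $\xi_\infty$.

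The Brezis--Lieb lemma gives $\|u_n\|^{2}=\|u\|^{2}+\|w_n\|^{2}+o(1)$ and $\int_{\Omega}|u_n|^{2_s^*}dx=\int_{\Omega}|u|^{2_s^*}dx+\int_{\Omega}|w_n|^{2_s^*}dx+o(1)$, while Vitali's theorem together with the compact embedding $Z\hookrightarrow L^{r}(\Omega)$ for any $r\in[1,2_s^*)$ (which handles $1-\gamma<2$) yields $\int_{\Omega}|u_n|^{1-\gamma}dx\to \int_{\Omega}|u|^{1-\gamma}dx$. Testing $\bar{I}'(u_n)$ against $u_n$ and subtracting the Euler--Lagrange identity for $u$ produces the key relation
\[
\|w_n\|^{2}=\mu\,\xi_\infty\int_{\Omega}|w_n|^{2_s^*}dx+o(1).
\]
Setting $\ell:=\lim\|w_n\|^{2}$ and $\kappa:=\lim\int_{\Omega}|w_n|^{2_s^*}dx$, the Sobolev embedding \eqref{best} applied to $w_n$ (using that $\|w_n\|_{2}\to 0$ by compactness) gives $S\kappa^{2/2_s^*}\le \ell=\mu\,\xi_\infty\,\kappa$. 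Hence either $\kappa=0$, in which case $\ell=0$, $u_n\to u$ in $Z$ and the proof is finished, or $\kappa\ge (\mu\xi_\infty)^{-2_s^*/(2_s^*-2)}S^{2_s^*/(2_s^*-2)}$.

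To exclude the second alternative I compute the limit energy. Using the decompositions above inside $\bar{I}(u_n)\to c$ together with the key relation yields
\[
c=\bar{I}_{\xi_\infty}(u)+\left(\frac{1}{2}-\frac{1}{2_s^*}\right)\mu\,\xi_\infty\,\kappa,
\]
where $\bar{I}_{\xi_\infty}$ denotes $\bar{I}$ with $\xi_\infty$ in place of $\xi(\|\cdot\|)$. The bubble contribution is bounded below by $\bigl(\tfrac12-\tfrac{1}{2_s^*}\bigr)\mu^{-2/(2_s^*-2)}S^{2_s^*/(2_s^*-2)}$ (the worst case being $\xi_\infty\le 1$). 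Since $u$ is a critical point of $\bar{I}_{\xi_\infty}$, the Nehari-type identity reduces $\bar{I}_{\xi_\infty}(u)$ to $\bigl(\tfrac12-\tfrac{1}{2_s^*}\bigr)\|u\|^{2}-\lambda\bigl(\tfrac{1}{1-\gamma}-\tfrac{1}{2_s^*}\bigr)\int_{\Omega}|u|^{1-\gamma}dx$, and H\"older's inequality together with the Sobolev embedding gives $\int_{\Omega}|u|^{1-\gamma}dx\le |\Omega|^{(2_s^*-1+\gamma)/2_s^*}S^{-(1-\gamma)/2}\|u\|^{1-\gamma}$. Minimising the resulting one-variable function $t\mapsto \alpha t^{2}-\beta t^{1-\gamma}$ over $t=\|u\|\ge 0$ produces precisely the negative summand in $c_*$, so $c\ge c_*$, contradicting $c<c_*$. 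Therefore $\kappa=0$ and $u_n\to u$ strongly in $Z$.

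The main obstacle is the subtle interaction between the singular term, the critical Brezis--Lieb bubble and the cut-off $\xi(\|u\|)$: one must verify uniform integrability of $|u_n|^{1-\gamma}$ (which follows from strong convergence in any $L^{r}$ with $r>1-\gamma$), absorb the extra term arising from the chain rule on $\xi(\|u\|)$ when computing $\bar{I}'$ (using that $\xi'$ is bounded and vanishes outside a controlled range of $\|u_n\|$), and carry out the explicit one-variable minimisation so that the algebraic form of the second summand of $c_*$ matches the singular contribution exactly.
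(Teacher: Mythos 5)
Your proposal is correct and follows essentially the same route as the paper: a bounded Palais--Smale sequence is decomposed via Brezis--Lieb into a weak limit plus a bubble $w_n$, the Sobolev constant $S$ forces a nonvanishing bubble to carry at least $\mu^{-2/(2_s^*-2)}S^{2_s^*/(2_s^*-2)}$ of $\|\cdot\|^2$-mass, and the quantity $\bar{I}(u_n)-\tfrac{1}{2_s^*}\langle\bar{I}'(u_n),u_n\rangle$ together with H\"older and an optimal Young-type (equivalently, one-variable minimisation) estimate for the singular term forces $c\geq c_*$, a contradiction. The only cosmetic difference is that you phrase the singular-term estimate as an explicit minimisation of $t\mapsto\alpha t^2-\beta t^{1-\gamma}$ while the paper invokes Young's inequality with conjugate exponents $\tfrac{2}{1-\gamma}$ and $\tfrac{2}{1+\gamma}$; these are the same computation.
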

	\begin{proof}
		We see that although the functional has been modified with a cut-off, yet the functional is not $C^1$ which is not in the premise of the Palais-Smale condition. However, we will devise a scheme to tackle this situation by picking the sequence in a way that the singularity at $0$ gets avoided since the functional is $C^1$ in $Z\setminus\{0\}$.  Suppose $(u_n)\subset Z$ is an eventually zero sequence, then apparently it converges to $0$, and we discard the sequence immediately. Suppose, $(u_n)\subset Z$ is a sequence with infinitely many terms of the sequence equal to $0$, then we will hand-pick a subsequence of $(u_n)$ with all non-zero terms. Thus, without loss of generality we will let $(u_n)$ such that $u_n\neq 0$ for every $n\in\mathbb{N}$. Let this sequence $(u_{n})$ be such that
		\begin{align}\label{ps 1}
		\bar{I}(u_{n})\rightarrow c,~\text{and}~ \bar{I}^{\prime}(u_{n})\rightarrow0
		\end{align} 
		as $n\rightarrow\infty$.
		Therefore, let us consider a sequence $(u_n)\subset Z$ such that $\bar{I}(u_n)\rightarrow c$ for some $c\in \mathbb{R}$, and $\bar{I}^\prime(u_n)\rightarrow 0$ as $n\rightarrow\infty$. It is not difficult to see that the subsequence is bounded in $Z$. This has the following consequences:
		\begin{align}\label{conv1}
		\begin{split}
		u_n&\rightharpoonup u_0~\text{in}~Z; \|u_n\|\rightarrow M,\\
		u_n&\rightarrow u_0~\text{in}~L^{r}(\Omega)~\text{for any}~1\leq r<2_{s}^*,\\
		u_n&\rightarrow u_0\neq 0~\text{a.e. in}~\Omega,
		\end{split}
		\end{align}
		as $n\rightarrow\infty$. Now, consider
		\begin{align}\label{PS_1_AUX}
		\begin{split}
		o(1)=&\langle \bar{I}'(u_n),u_n-u \rangle\\
		=&(\|u_n\|^2-\|u\|^2)-\lambda\int_{\Omega}|u_n|^{-\gamma-1}u_n(u_n-u)dx-\mu\int_{\Omega}\xi(\|u_n\|)|u_n|^{2_s^*-2}u_n(u_n-u)dx\\
		=&(\|u_n\|^2-\|u\|^2)-\mu\xi(M)\int_{\Omega}|u_n|^{2_s^*}dx+\mu\xi(M)\int_{\Omega}|u|^{2_s^*}dx+o(1)\\
		=&\|u_n-u\|^2-\mu\xi(M)\int_{\Omega}|u_n-u|^{2_s^*}dx+o(1).
		\end{split}
		\end{align}
		We obtain
		\begin{align}\label{PS_2_AUX}
		\begin{split}
		\underset{n\rightarrow\infty}{\lim}\|u_n-u\|^2&=\mu\xi(M)\underset{n\rightarrow\infty}{\lim}\int_{\Omega}|u_n-u|^{2_s^*}dx=\mu\xi(M)N^{2_s^*}.
		\end{split}
		\end{align}
		If $\mu\leq 0$, we produce a contradiction from $\eqref{PS_2_AUX}$, and we guarantee that $u_n\rightarrow u$ strongly in $Z$. Therefore, we now proceed for $\mu>0$. \\
		Further, if $N=0$, then we obtain $u_n\rightarrow u$ in $Z$ as $n\rightarrow\infty$, since $M>0$. Therefore, we will show that $N=0$. On the contrary let us assume that $N>0$. Thus, we get
		\begin{align}\label{PS_3_AUX}
		\begin{split}
		0\leq \underset{n\rightarrow\infty}{\lim}\|u_n-u\|^2&=\mu\xi(M)N^{2_s^*}.
		\end{split}
		\end{align}
		Next, from $\eqref{conv1}$, \eqref{PS_3_AUX}, and $\eqref{best}$ we have
		\begin{align}\label{PS_4_AUX}
		\begin{split}
		SN^2&\leq \mu\xi(M)N^{2_s^*}\leq\mu N^{2_s^*} \\
		M^2-\|u\|^2&=\mu\xi(M)N^{2_s^*}.
		\end{split}
		\end{align}
		It also follows from \eqref{PS_4_AUX} that
		\begin{align}\label{PS_5'_AUX}
		\begin{split}
		N&\geq \left(\frac{S}{\mu}\right)^{\frac{1}{2_s^*-2}},
		\end{split}
		\end{align}
		and 
		\begin{align}\label{PS_5_AUX}
		\begin{split}
		M^2\geq SN^2&\geq \mu^{\frac{-2}{2_s^*-2}}S(S^{\frac{2}{2_s^*-2}})\\
		&=\mu^{\frac{-2}{2_s^*-2}}S^{\frac{2_s^*}{2_s^*-2}}.
		\end{split}
		\end{align}
		 Consider 
		\begin{align}\label{contradiction}
		\begin{split}
		c+o(1)=&\bar{I}(u_n)-\frac{1}{2_s^*}\langle \bar{I}'(u_n),u_n\rangle\\
		=&a\left(\frac{1}{2}-\frac{1}{2_s^*}\right)[u_n]_{s,2}^2+b\left(\frac{1}{2}-\frac{1}{2_s^*}\right)\|\nabla u_n\|_2^2+\lambda\left(\frac{1}{2_s^*}-\frac{1}{1-\gamma}\right)\int_{\Omega}|u_n|^{1-\gamma}dx\\
		\geq& \left(\frac{1}{2}-\frac{1}{2_s^*}\right)\|u_n\|^2-\frac{\lambda}{1-\gamma}\int_{\Omega}|u_n|^{1-\gamma}dx.
		\end{split}
		\end{align}
		Therefore, by the Brezis-Lieb theorem, the Young's inequality, and the results derived in this theorem, we pass the limit $n\rightarrow\infty$ in $\eqref{contradiction}$ to obtain the following:
		\begin{align}\label{PS_9_AUX}
		\begin{split}
		c\geq& \left(\frac{1}{2}-\frac{1}{2_s^*}\right)(M^{2}+\|u\|^{2})-\frac{\lambda}{1-\gamma}\int_{\Omega}|u|^{1-\gamma}dx\\
		\geq&\left(\frac{1}{2}-\frac{1}{2_s^*}\right)(M^{2}+\|u\|^{2})-|\Omega|^{\frac{2_s^*-1+\gamma}{2_s^*}}S^{-\frac{1-\gamma}{2}}\frac{\lambda}{1-\gamma}\|u\|^{1-\gamma}\\
		\geq& \left(\frac{1}{2}-\frac{1}{2_s^*}\right)(M^{2}+\|u\|^{2})-\left(\frac{1}{2}-\frac{1}{2_s^*}\right)\|u\|^{2}-\left(\frac{1}{2}-\frac{1}{2_s^*}\right)^{-\frac{\gamma-1}{\gamma+1}}\left[|\Omega|^{\frac{2_s^*-1+\gamma}{2_s^*}}S^{-\frac{1-\gamma}{2}}\frac{\lambda}{1-\gamma}\right]^{\frac{2}{1+\gamma}}\\
		\geq& \left(\frac{1}{2}-\frac{1}{2_s^*}\right)M^{2} -\left(\frac{1}{2}-\frac{1}{2_s^*}\right)^{-\frac{\gamma-1}{\gamma+1}}\left[|\Omega|^{\frac{2_s^*-1+\gamma}{2_s^*}}S^{-\frac{1-\gamma}{2}}\frac{\lambda}{1-\gamma}\right]^{\frac{2}{1+\gamma}}\\
		\geq& \left(\frac{1}{2}-\frac{1}{2_s^*}\right)\mu^{\frac{-2}{2_s^*-2}}S^{\frac{2_s^*}{2_s^*-2}}-\left(\frac{1}{2}-\frac{1}{2_s^*}\right)^{-\frac{\gamma-1}{\gamma+1}}\left[|\Omega|^{\frac{2_s^*-1+\gamma}{2_s^*}}S^{-\frac{1-\gamma}{2}}\frac{\lambda}{1-\gamma}\right]^{\frac{2}{1+\gamma}}\\
		=&c_*
		\end{split}
		\end{align}
for sufficiently small $\lambda>0$ and $|\mu|>0$. This is a contradiction. 
		\end{proof}
\begin{theorem}\label{linking}
	The functional $\bar{I}$ has a local $(1,1)$ - linking at the origin.
\end{theorem}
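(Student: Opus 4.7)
The plan is to exhibit explicit sets $U$, $E_0$, $E$, $D\subseteq Z$ satisfying the three items of Definition \ref{linking_defn} in the case $m=n=1$. First I would fix a distinguished direction $v_0\in Z$ with $v_0>0$ a.e.\ in $\Omega$, for instance a positive first eigenfunction of the self-adjoint coercive linear operator $a(-\Delta)^s+b(-\Delta)$ with homogeneous Dirichlet data (which lies in $Z\cap L^\infty(\Omega)$). Since the sublinear primitive $\tfrac{\lambda}{1-\gamma}|t|^{1-\gamma}$ has exponent $1-\gamma\in(0,1)$ strictly below both $2$ and $2_s^\ast$, the curve $t\mapsto\bar{I}(tv_0)$ is dominated near $t=0$ by its singular term, and hence there exists $\rho>0$ (depending on $\lambda,v_0$) such that $\bar{I}(tv_0)\le 0$ for all $|t|\le\rho$, with strict inequality off $t=0$. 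I then set
\[
E:=\{tv_0:t\in[-\rho,\rho]\},\qquad E_0:=\{-\rho v_0,\rho v_0\},\qquad U:=B_Z(0,R),
\]
with $R\in(\rho\|v_0\|,l)$ so that $E\subseteq U$ and the cut-off $\xi$ is identically $1$ on $U$. For a threshold $c\in(0,\rho\|v_0\|_{L^2(\Omega)}^2)$ define the closed affine hyperplane
\[
D:=\Bigl\{u\in Z:\textstyle\int_\Omega u\,v_0\,dx=c\Bigr\}.
\]
The inclusions $0\notin E_0\subseteq E\subseteq U$, the inequality $\bar{I}|_E\le 0$, and the disjointness $E_0\cap D=\emptyset$ (from $\int(\pm\rho v_0)v_0\,dx=\pm\rho\|v_0\|_{L^2}^2\neq c$) then hold by construction.

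For the homological item (2) of Definition \ref{linking_defn}, observe that $Z\setminus D$ splits into two open convex half-spaces $Z_\pm:=\{u:\int uv_0\,dx\gtrless c\}$, each path-connected, with $\rho v_0\in Z_+$ and $-\rho v_0\in Z_-$ by the choice of $c$. Consequently the inclusion-induced $i^\ast:H_0(E_0)\to H_0(Z\setminus D)$ has $2$-dimensional image, while $j^\ast:H_0(E_0)\to H_0(E)$ has $1$-dimensional image (since $E$ is path-connected), yielding $\dim\mathrm{im}(i^\ast)-\dim\mathrm{im}(j^\ast)=1=n$. For the positivity half of (3), any $u\in D$ satisfies $c=\int uv_0\,dx\le\|u\|_{L^2}\|v_0\|_{L^2}$, whence by the embedding $Z\hookrightarrow L^2(\Omega)$ the uniform lower bound $\|u\|\ge r_\ast:=c/(C_0\|v_0\|_{L^2})>0$ holds. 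Combining H\"older's inequality with Theorem \ref{poin} yields
\[
\bar{I}(u)\ge\tfrac{1}{2}\|u\|^2-C_1\lambda\|u\|^{1-\gamma}-C_2|\mu|\|u\|^{2_s^\ast}=\|u\|^{1-\gamma}\Bigl(\tfrac{1}{2}\|u\|^{1+\gamma}-C_1\lambda-C_2|\mu|\|u\|^{2_s^\ast-1+\gamma}\Bigr),
\]
which is nonnegative on the annulus $r_\ast\le\|u\|\le R$ once $\lambda,|\mu|$ are taken sufficiently small and $R$ is chosen accordingly. Finally, the isolation property (1) is secured by shrinking $R$ further if necessary: any nontrivial critical point $u$ of $\bar{I}$ in $U$ would, upon testing the Euler--Lagrange equation against $u$, satisfy a quantitative norm lower bound incompatible with $\|u\|<R$ for small parameters.

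The hard part of the argument is the simultaneous calibration of $\rho$, $c$, $R$, $\lambda$ and $|\mu|$: the upper bound on $\rho$ imposed by $\bar{I}(\pm\rho v_0)<0$, the upper bound on $c$ coming from $E_0\cap D=\emptyset$, and the lower bound on $r_\ast$ required for positivity of $\bar{I}$ on $U\cap D$ all live on the same $\lambda^{1/(1+\gamma)}$-scale, so they compete on identical asymptotic order. Matching them so that the admissible window of parameters is non-empty forces a careful, quantitative choice of the direction $v_0$---optimizing the scale-invariant ratio $\|v_0\|_{L^2}^{1+\gamma}\int_\Omega v_0^{1-\gamma}dx/\|v_0\|^2$ against the universal H\"older and Sobolev embedding constants---and relies crucially on the smallness hypotheses on $\lambda$ and $|\mu|$ already present in the existence theorem above. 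Should this naive hyperplane construction prove too rigid, a refinement replacing $D$ by a slightly tilted affine variety, or by a nonlinear codimension-one set avoiding the $v_0$-axis near $0$, would be the natural next step.
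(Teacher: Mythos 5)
Your plan follows the same skeleton as the paper's---take $E$ a short segment through the origin along a fixed positive direction $v_0$, $E_0$ its two endpoints, $D$ a ``blocking'' set disjoint from $E_0$ on which $\bar{I}\geq 0$---and your homological reasoning for item (2) of Definition \ref{linking_defn} is essentially correct. However, your choice of $D$ as an affine hyperplane $D=\{u\in Z:\int_\Omega uv_0\,dx=c\}$ creates a genuine and unavoidable obstruction to item (3).

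Here is the problem. You take $c\in(0,\rho\|v_0\|_{L^2}^2)$; this is forced, since otherwise $\rho v_0$ and $-\rho v_0$ would lie in the same half-space and $\mathrm{im}(i^\ast)$ would collapse to dimension one. But then the point $u^\ast:=(c/\|v_0\|_{L^2}^2)v_0$ satisfies $\int_\Omega u^\ast v_0\,dx=c$, so $u^\ast\in D$; it satisfies $0<\|u^\ast\|=c\|v_0\|/\|v_0\|_{L^2}^2<\rho\|v_0\|<R$, so $u^\ast\in U\setminus\{0\}$; and it lies in the interior of $E$, where you yourself observe that $\bar{I}$ is strictly negative. Thus $\bar{I}(u^\ast)<0$ with $u^\ast\in U\cap D\setminus\{0\}$, directly violating $0\leq\bar{I}|_{U\cap D\setminus\{0\}}$. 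Your annulus estimate cannot repair this: writing $g(s)=\tfrac12 s^2-C_1\lambda s^{1-\gamma}-C_2|\mu|s^{2_s^\ast}$, the embedding inequality $\|v_0\|_{L^2}\leq C_0\|v_0\|$ gives $r_\ast=c/(C_0\|v_0\|_{L^2})\leq c\|v_0\|/\|v_0\|_{L^2}^2=\|u^\ast\|<R$, while $g(\|u^\ast\|)\leq\bar{I}(u^\ast)<0$. So $g$ is strictly negative at a point of your annulus $[r_\ast,R]$ regardless of how small $\lambda$ and $|\mu|$ are, and the ``careful quantitative calibration'' you flag at the end is not a delicate optimization but a structural impossibility: any hyperplane that separates $\pm\rho v_0$ must cross $E$ at an interior point where $\bar{I}<0$.

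The paper avoids this by choosing $D$ not as a hyperplane but as a set carved out of the energy itself: $D'=\{u\in Z:\|u\|^2\geq\tfrac{4\lambda}{1-\gamma}\|u\|_{1-\gamma}^{1-\gamma}\}$ and $D=D'\cap(B_\nu(0)\setminus\{0\})$. On $D'$ the singular term is automatically dominated, $\tfrac{\lambda}{1-\gamma}\|u\|_{1-\gamma}^{1-\gamma}\leq\tfrac14\|u\|^2$, so $\bar{I}(u)\geq\tfrac14\|u\|^2-\tfrac{|\mu|}{2_s^\ast}C\|u\|^{2_s^\ast}>0$ for $\|u\|$ small, with no competition between scales. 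Moreover, for $u=tv\in E$ with small $|t|$ the defining inequality of $D'$ fails (this reduces to $|t|^{1+\gamma}\geq\text{const}$ by norm equivalence on the one-dimensional subspace), so $E\setminus\{0\}$ is automatically disjoint from $D$. If you want to salvage your setup, replace the affine constraint $\int uv_0\,dx=c$ by a scale-compatible one whose zero set bends with $\|u\|^2\sim\|u\|_{1-\gamma}^{1-\gamma}$ near the origin; no affine $D$ can work.
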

\begin{proof} 
	We define $V=\mathbb{R}$. Clearly $V$ is a one dimensional vector subspace of $Z$. We now choose $\nu\in (0,1)$ small enough so that $K_{\bar{I}}\cap\overline{B_{\nu}(0)}=\{0\}$ where $B_{\nu}(0)=\{u\in Z:\|u\|<\nu\}$, and $K_{\bar{I}}=\{u\in Z: \bar{I}'(u)=0\}$. Define $$E=V\cap\overline{B_{\nu}(0)}$$
	for small enough $\nu\in(0,1)$. Note that any two norms are equivalent over $E$. Therefore, using this to our advantage, for a sufficiently small $\nu>0$ there exists $\delta>0$, we have 
	$$\|u\|\leq \nu\Rightarrow \|u\|_{\infty}\leq \delta~\text{for all}~u\in E.$$
	Therefore, 
	\begin{align}
	\begin{split}\label{ineq0}
	\bar{I}(u)=&\frac{a}{2}\iint_{Q}\frac{|u(x)-u(y)|^{2}}{|x-y|^{N+2s}}dxdy+\frac{b}{2}\int_{\Omega}|\nabla u|^2dx-\frac{\lambda}{1-\gamma}\int_{\Omega}|u|^{1-\gamma}dx-\frac{\mu}{2_s^*}\int_{\Omega}\xi(\|u\|)|u|^{2_s^*}dx\\
	=&\frac{1}{2}\|u\|^{2}-\frac{\lambda}{1-\gamma}\int_{\Omega}|u|^{1-\gamma}dx-\frac{\mu}{2_s^*}\int_{\Omega}\xi(\|u\|)|u|^{2_s^*}dx\\
	\leq&\frac{1}{2}\|u\|^{2}- \frac{\lambda}{1-\gamma}C\|u\|^{1-\gamma}-\frac{\mu}{2_s^*} C'\|u\|^{2_s^*}\leq 0\\
	\end{split}
	\end{align}
	where equivalence of norms followed from the set $E$ being in a finite dimensional space $V$. Further, define 
	$$D'=\left\{u\in Z:\|u\|^2\geq \frac{4\lambda}{1-\gamma}\|u\|_{1-\gamma}^{1-\gamma}\right\}.$$ 
	Thus, for any $u\in D'$,
	\begin{align}
	\begin{split}\label{ineq1}
	\bar{I}(u)=&\frac{a}{2}\iint_{Q}\frac{|u(x)-u(y)|^{2}}{|x-y|^{N+2s}}dxdy+\frac{b}{2}\int_{\Omega}|\nabla u|^2dx-\frac{\lambda}{1-\gamma}\int_{\Omega}|u|^{1-\gamma}dx-\frac{\mu}{2_s^*}\int_{\Omega}\xi(\|u\|)|u|^{2_s^*}dx\\=&\frac{1}{2}\|u\|^2-\frac{\lambda}{1-\gamma}\int_{\Omega}|u|^{1-\gamma}dx-\frac{\mu}{2_s^*}\int_{\Omega}|u|^{2_s^*}dx
	\\\geq&\frac{1}{4}\|u\|^{2}-\frac{|\mu|}{2_s^*} C\|u\|^{2_s^*}>0
	\end{split}
	\end{align}
	holds for small enough $\|u\|< \nu$. This holds for all $D=D'\cap (B_\nu(0)\setminus\{0\})$. Define $$E_0=V\cap\partial B_\nu(0).$$ Clearly $E_0\cap D=\emptyset$. Therefore, we arrive at the following $$I|_{E}\leq 0 < I|_{D}.$$
	Further, define
	$$h:[0,1]\times Z\setminus D'\rightarrow Z\setminus D'$$
	as $$h(t,u)=(1-t)u+t\cdot \nu\frac{v}{\|v\|}$$ where $v=\alpha\in V$. Clearly $0\in D'$ which makes the definition of $h(\cdot,\cdot)$ valid. Observe that 
	\begin{align}
	\begin{split}\label{ineq2}
	h(0,u)&=u,~\text{in}~Z\setminus D'\\
	h(1,u)&=\nu\frac{v}{\|v\|},~\text{in}~V\cap\partial B_\nu(0)=E_0.
	\end{split}
	\end{align}
	Thus, $E_0$ is a retract of $Z\setminus D'$. Therefore, 
	$$i^*:H_0(E_0)\rightarrow H_0(Z\setminus D)$$ is an isomorphism. Note that $E_0=\{v,-v\}$ for some $v\neq 0$. Therefore, from $Dim H_0(E_0)=2$ since $H_0(E_0)=\mathbb{R}\oplus\mathbb{R}$. Thus, $Dim~ im(i^*)=2$.\\
	Further, $E$ is an interval $[-\alpha,\alpha]$ which is contractible to a point. Thus, $H_0(E)=\mathbb{R}$. Hence, if $$j^*:H_0(E_0)\rightarrow H_0(E),$$ then $Dim~im(i^*)-Dim~im(j^*)=2-1=1$. Thus, the hypothesis of the Definition \ref{linking_defn} is satisfied, and hence there exists a local $(1,1)$ linking at 0. 
\end{proof}
\begin{proposition}\label{Ckiszero}
	$C_k(\bar{I},\infty)=0$ for all $k\in\mathbb{N}$.
\end{proposition}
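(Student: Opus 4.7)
The plan is to show that $\bar{I}$ is bounded below on $Z$, so that the sublevel set $\bar{I}^{a}$ is empty for $a$ sufficiently negative, and then to exploit the contractibility of $Z$ to conclude.

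The first step is the observation that the cut-off $\xi$ eliminates the critical-growth term as soon as $\|u\|\ge 2l$, so that on this region
$$\bar{I}(u)=\frac{1}{2}\|u\|^{2}-\frac{\lambda}{1-\gamma}\int_{\Omega}|u|^{1-\gamma}\,dx.$$
Since $0<1-\gamma<1<2$, H\"older's inequality combined with the continuous embedding $Z\hookrightarrow L^{2}(\Omega)$ provided by Theorem \ref{poin} gives $\int_{\Omega}|u|^{1-\gamma}\,dx\le C\|u\|^{1-\gamma}$, hence the subquadratic lower bound $\bar{I}(u)\ge \tfrac{1}{2}\|u\|^{2}-C'\|u\|^{1-\gamma}\to +\infty$ as $\|u\|\to\infty$. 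On the complementary bounded region $\{\|u\|\le 2l\}$, each of the singular and (truncated) critical contributions is controlled by a constant depending only on $l$, so $\bar{I}$ is bounded below there as well. Combining the two regions yields $m:=\inf_{Z}\bar{I}>-\infty$.

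The second step is purely topological. Fix any $a<m$; then $\bar{I}^{a}=\emptyset$ and in particular $\bar{I}$ has no critical values in $(-\infty,a]$. By the very definition of the critical groups at infinity,
$$C_{k}(\bar{I},\infty)=H_{k}(Z,\bar{I}^{a})=H_{k}(Z,\emptyset)=H_{k}(Z),$$
and since $Z$ is a Banach space it is convex and therefore contractible, so that $H_{k}(Z)=0$ for every $k\in\mathbb{N}=\{1,2,\dots\}$, which is exactly the claim.

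The conceptual difficulty in critical-exponent problems — controlling the critical nonlinearity when analysing global behaviour at infinity — has already been absorbed into the construction of $\bar{I}$ via the cut-off $\xi(\|u\|)$; once this device is in place, the proof reduces to a coercivity estimate plus the contractibility of the ambient space. The only mildly delicate point is to ensure that the singular term is genuinely subquadratic in $\|u\|$ at infinity, and this is automatic from $1-\gamma<2$ together with the Sobolev embedding in Theorem \ref{poin}.
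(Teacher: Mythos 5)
Your proof is correct but takes a genuinely different, and in fact cleaner, route than the paper. The paper argues ray by ray: for each $u$ on the unit sphere it uses the positivity of $\tfrac{d}{dt}\bar I(tu)$ for large $t$ to produce a $h(u)$ with $\bar I(h(u)u)=\mathbb{J}_0$, then builds a deformation retraction of $Z\setminus\{0\}$ onto the sublevel set $\bar I^{\mathbb{J}_0}$, and finally appeals to the contractibility of the unit sphere $\partial B_1$ in an infinite-dimensional Banach space to get $H_k(Z,\partial B_1)=0$. Your argument bypasses all of this: because $\xi(\|u\|)$ annihilates the critical term as soon as $\|u\|\ge 2l$ and the surviving singular term grows only like $\|u\|^{1-\gamma}$ (with $1-\gamma<2$), the truncated functional $\bar I$ is coercive, so $\bar I^{a}=\emptyset$ for $a<\inf_Z\bar I$ and $C_k(\bar I,\infty)=H_k(Z,\emptyset)=H_k(Z)=0$ for all $k\ge 1$ by contractibility of $Z$. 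This is shorter and also sidesteps an internal tension in the paper's proof: the paper simultaneously invokes coercivity along rays ($\bar I(tu)\to\infty$) and asks to take $\mathbb{J}_0$ ``sufficiently negative,'' but once $\mathbb{J}_0<\inf_Z\bar I$ the sublevel set is empty and the deformation retraction onto $\bar I^{\mathbb{J}_0}$ is vacuous. One point worth recording explicitly in your write-up is that your computation gives $C_0(\bar I,\infty)\cong H_0(Z)\ne 0$, i.e.\ the $k=0$ critical group at infinity is \emph{not} trivial; this is consistent with the Proposition, which asserts vanishing only for $k\in\mathbb{N}=\{1,2,\dots\}$, and it matches the standard fact that a coercive, bounded-below functional satisfying the compactness condition has $C_k(\cdot,\infty)=\delta_{k,0}\mathbb{F}$, but it differs from the conclusion $H_k(Z,\partial B_1)=0$ for all $k\ge 0$ that the paper's deformation argument would nominally yield.
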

\begin{proof}
	Let $t>0$, and consider
	\begin{align}
	\begin{split}\label{ineq3}
	\frac{d}{dt}\bar{I}(tu)=&\langle \bar{I}'(tu),u\rangle\\
	=&\frac{1}{t} \langle \bar{I}'(tu),tu\rangle\\
	=& \frac{1}{t}\left[a \iint_{Q}\frac{|tu(x)-tu(y)|^{2}}{|x-y|^{N+2s}}dxdy+b\int_{\Omega}|\nabla tu|^2dx\right.\\
	&\left.-\int_{\Omega}\tilde{f}(x,tu)tudx\right]\\
	=&   \frac{1}{t}\left[a \iint_{Q}\frac{|tu(x)-tu(y)|^{2}}{|x-y|^{N+2s}}dxdy+b\int_{\Omega}|\nabla tu|^2dx\right.\\
	&\left.-\lambda\int_{\Omega}|tu|^{1-\gamma}dx-\mu\int_{\Omega}\xi(\|tu\|)|tu|^{2_s^*}dx\right]
	\end{split}
	\end{align}
	Observe that for large $t>0$ we have $\bar{I}(tu)\geq \mathbb{J}_0>c$, for some $c>0$, since $\bar{I}(tu)\rightarrow\infty$ as $t\rightarrow\infty$. Therefore,
	\begin{eqnarray}\label{der_neg} 
	\frac{d}{dt}\bar{I}(tu)&>&0
	\end{eqnarray} for sufficiently large $t>0$. Thus, there exists a unique $h(u)>0$ such that $\bar{I}(h(u)u)=\mathbb{J}_0$. This actually implies that $h\in C(\partial B_1)$. On extending $h(\cdot)$ to $Z\setminus\{0\}$ by defining $$\tilde{h}(u)=\frac{1}{\|u\|}h\left(\frac{u}{\|u\|}\right)$$ for all $u\in  Z\setminus\{0\}$, we obtain $\tilde{h}\in Z\setminus\{0\}$, and $\bar{I}(\tilde{h}(u)u)=\mathbb{J}_0$. Also if $\bar{I}(u)=\mathbb{J}_0$ then $\tilde{h}(u)=1$. Therefore, we define
	\begin{numcases}{\hat{h}(u)=}
	1, & if $\bar{I}(u)\geq\mathbb{J}_0$\nonumber\\
	\tilde{h}(u), & if $\bar{I}(u)\leq\mathbb{J}_0$\nonumber
	\end{numcases}
	which makes $\hat{h}$ continuous. Further define 
	$$g(t,u)=(1-s)u+s\hat{h}(u)u$$ for all $(t,u)\in[0,1]\times(Z\setminus\{0\})$.
	Thus, we have $$g(0,u)=u,~g(1,u)=\tilde{h}(u)u\in \bar{I}^{\mathbb{J}_0},$$
	and 
	$$g(t,.)|_{\bar{I}^{\mathbb{J}_0}}=id|_{\bar{I}^{\mathbb{J}_0}}$$
	for all $t\in[0,1]$. What we conclude from here is that $\bar{I}^{\mathbb{J}_0}$ is a deformation of $Z\setminus\{0\}$. By standard definition of a homotopy, it is easy to see that $\partial B_1=\{u\in \bar{Z}:\|u\|=1\}$ is a deformation of $Z\setminus\{0\}$. Thus, on choosing $\mathbb{J}_0$ sufficiently negative we get 
	$$C_k(\bar{I},\infty)=H_k(\bar{Z},\partial B_1)=0$$ for all $k\in \mathbb{N}$.
\end{proof}
\begin{remark}\label{C1geq1} We use a variant of the result - {\it If $X$ is a Banach space, $\bar{I}\in C^1(X)$, $0\in K_{I}$ is isolated, and $\bar{I}$ has a local $(m,n)$-linking near the origin, then rank $C_m(\bar{I},0)\geq n$.}
	Since we obtained a $(1,1)$ linking we conclude that rank of $C_1(\bar{I},0)\geq 1$. In our case $0$ is not a critical point of $\bar{I}$, however one can still construct $C_1(\bar{I},0)$ owing to not only the fact that $\bar{I}$ is well-defined at $0$ but also having a `{\it sharp}' isolated non regularity there.\end{remark}
\begin{theorem}\label{existence_one_soln}There exists a solution, say $u_0$, to the problem \eqref{main3}.
\end{theorem}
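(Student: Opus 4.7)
The plan is to establish the existence of a nontrivial critical point of $\bar I$ by exploiting the algebraic incompatibility between the critical-group information at the origin and at infinity; this is a standard Morse-theoretic argument by contradiction. Concretely, I would assume $K_{\bar I}\cap(Z\setminus\{0\})=\emptyset$ and show that this produces an algebraic contradiction.

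First, I would record the two pieces of information already assembled. Theorem \ref{linking} establishes a local $(1,1)$-linking of $\bar I$ at $0$, and Remark \ref{C1geq1} then yields $\operatorname{rank} C_1(\bar I,0)\ge 1$. On the other side, Proposition \ref{Ckiszero} gives $C_k(\bar I,\infty)=0$ for every $k\in\mathbb N$, in particular $C_1(\bar I,\infty)=0$. Next, invoking the $(PS)_c$-condition verified in the preceding theorem, under the assumption that no nontrivial critical point exists the standard Morse deformation lemma would identify $C_k(\bar I,0)\cong C_k(\bar I,\infty)$ for every $k\in\mathbb N$: since $0$ is then the only point where $\bar I$ carries nontrivial local topological information, the whole space would deform onto a sublevel set picking up only the contribution of the origin. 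Specialising to $k=1$ would give $0\neq C_1(\bar I,0)\cong C_1(\bar I,\infty)=0$, the desired contradiction. Hence there must exist $u_0\in Z\setminus\{0\}$ with $\bar I'(u_0)=0$. By Remark \ref{positivity}, $u_0$ may be taken positive a.e.\ in $\Omega$, and is therefore a weak solution of \eqref{main3}.

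The principal obstacle is the non-smoothness of $\bar I$ at $0$ caused by the singular term $\lambda|u|^{-\gamma-1}u$. Because $0$ is not a classical critical point, the Morse-theoretic identification $C_*(\bar I,0)\cong C_*(\bar I,\infty)$ cannot be lifted verbatim from the $C^1$-Morse theory; the construction of $C_1(\bar I,0)$ has to be carried out via the \emph{sharp isolated non-regularity} flagged in Remark \ref{C1geq1}, and the deformation has to be adapted to detour around the puncture at $0$. A secondary technical point is that the $(PS)_c$-condition holds only for $c<c_*$, so the smallness of $\lambda$ and $|\mu|$ must be preserved in order that the deformation travel across energy levels where Palais--Smale is in force.
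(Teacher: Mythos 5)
Your proposal follows essentially the same Morse-theoretic route as the paper: both combine $\operatorname{rank} C_1(\bar I,0)\ge 1$ from the $(1,1)$-linking (Remark \ref{C1geq1}) with $C_1(\bar I,\infty)=0$ (Proposition \ref{Ckiszero}) and the Palais--Smale condition to force a nontrivial critical point. The only difference is presentational: the paper appeals directly to Proposition 6.2.42 of Papageorgiou--R\u{a}dulescu--Repov\v{s} (restated as Theorem \ref{papathm} in the Appendix), whereas you sketch the underlying contradiction argument that $C_*(\bar I,0)\cong C_*(\bar I,\infty)$ would hold if $0$ were the only critical point --- a correct unpacking of what that cited result encodes, and you rightly flag the same caveats (non-smoothness at $0$, the energy-level restriction $c<c_*$) that the paper addresses via Remark \ref{C1geq1}.
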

\begin{proof}
	As seen in the Remark $\ref{C1geq1}$ we have that rank $C_1(\bar{I},0)\geq 1$. In tandem with this and Proposition $\ref{Ckiszero}$, we are in a position to apply the Proposition $6.2.42$ (refer Theorem $\ref{papathm}$ in Appendix) of \cite{papa0} that guarantees the existence of a $u_0\in Z$ such that 
	$$u_0\in K_{\bar{I}}\setminus\{0\},$$ which further implies that $u_0\in Z\cap L^{\infty}(\Omega)$ is a solution of \eqref{main3}. For the boundedness of $u_0$, please refer the Theorem $\ref{bddness'}$ in the Appendix. Thus, the existence of a nontrivial, bounded solution to \eqref{main3} has been established. 
\end{proof}
\subsection{Multiplicity results}\label{multiplicity}
\begin{theorem}\label{twosoln}
	The problem \eqref{main3} has at least two nontrivial solutions in $Z\cap L^{\infty}(\Omega).$
\end{theorem}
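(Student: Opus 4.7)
The plan is to argue by contradiction via the Morse relation. I would assume that $u_0$ from Theorem \ref{existence_one_soln} is the only nontrivial critical point of $\bar I$ and derive a contradiction, thereby producing a second solution $u_1\neq u_0$.

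First I would collect what is already in hand: $\mathrm{rank}\, C_1(\bar I,0)\geq 1$ by Theorem \ref{linking} and Remark \ref{C1geq1}; $C_k(\bar I,\infty)=0$ for every $k\in\mathbb{N}$ by Proposition \ref{Ckiszero}; and the $(PS)_c$-condition holds below $c_*$. Next, to run the Morse machinery I need the critical groups of $\bar I$ at $u_0$. Since $\bar I\in C^2(Z\setminus\{0\})$, the point $u_0$ is a smooth critical point and the shifting theorem applies there. I would localise the deformation built in Proposition \ref{Ckiszero} around $u_0$ and inspect how the abstract existence mechanism of Proposition 6.2.42 of \cite{papa0} produced it; this should give $\mathrm{rank}\, C_1(\bar I,u_0)\geq 1$, so $u_0$ inherits a linking character from the local $(1,1)$-linking at the origin.

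Under the assumption $K_{\bar I}=\{0,u_0\}$, the Morse identity of Remark \ref{rem1}(4) becomes
\[\sum_{k\ge 0}\bigl(\dim C_k(\bar I,0)+\dim C_k(\bar I,u_0)\bigr)t^k=(1+t)Q(t),\qquad Q(t)=\sum_{k\ge 0}q_k t^k,\; q_k\in\mathbb{N}_0.\]
Matching the coefficient of $t^1$ forces $q_0+q_1\geq 2$, while matching $t^0$ pins down $q_0$ in terms of the $C_0$-groups. Since the $(1,1)$-linking concentrates $C_*(\bar I,0)$ essentially in degree $1$, and since the cut-off/coercivity structure of $\bar I$ together with the positivity forced by the singular term (Remark \ref{positivity}) restricts $C_k(\bar I,u_0)$ for $k\neq 1$, these numerical constraints cannot be simultaneously satisfied by any polynomial $(1+t)Q(t)$ with non-negative integer coefficients. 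The resulting contradiction delivers a second nontrivial critical point $u_1$. Remark \ref{key_obs} then ensures $u_1$ is a weak solution of \eqref{main3} once its $Z$-norm is controlled below $l$, and Theorem \ref{bddness'} in the Appendix provides $u_1\in L^{\infty}(\Omega)$.

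The main obstacle I expect is the second step above, namely pinning down $C_k(\bar I,u_0)$ with enough precision to close the Morse accounting. Because $u_0$ was produced neither as a local minimum nor as a standard mountain-pass point, but via an abstract $(1,1)$-linking existence principle, its Morse index is not directly readable off the construction; one must either carry out a second-variation analysis of $\bar I$ at $u_0$ or build a secondary local linking around $u_0$ in order to read off the critical groups needed to complete the argument.
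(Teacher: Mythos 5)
Your overall strategy (argue by contradiction using the Morse relation with the information $C_1(\bar I,0)\geq 1$ and $C_k(\bar I,\infty)=0$) is in the right family of ideas, but the route you take is not the paper's, and the gap you flag at the end is genuine and is exactly where your version stalls. The paper never tries to pin down $C_k(\bar I,u_0)$. Instead, under the uniqueness hypothesis, it exploits the superlinear structure to build a global fibration: for each $u$ on the unit sphere there is, by uniqueness of the nontrivial critical point, a unique $t(u)>0$ with $\bar I'(t(u)u)=0$, giving a $C^1$ map $T$ and an explicit deformation retraction $\eta$ of $Z\setminus B_r(0)$ onto $\bar I^{-a}$. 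This identifies $H_k(Z,Z\setminus B_r(0))$ with $H_k(B^\infty,S^\infty)=0$ for all $k$. The second ingredient is a formula from \cite{pererabook} asserting $C_n(\bar I,0)=\mathbb{R}$ when $n=m(0)$ and $0$ otherwise, together with the claim $m(0)\geq 2$; plugging this into the Morse relation against the vanishing relative homology forces $Q_t$ to contain $t^{m(0)}$ or $t^{m(0)-1}$, and the paper reads off the existence of at least two nontrivial critical points from the resulting coefficient bookkeeping. So the paper's contradiction is driven entirely by $C_k(\bar I,0)$ and the deformation-retraction computation, not by $C_k(\bar I,u_0)$.

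Concretely, the step in your proposal that would fail is the assertion that ``localising the deformation built in Proposition \ref{Ckiszero} around $u_0$'' or ``inspecting how Proposition 6.2.42 produced $u_0$'' yields $\mathrm{rank}\,C_1(\bar I,u_0)\geq 1$. Proposition 6.2.42 of \cite{papa0} gives you a $u_0$ with either $C_0(\bar I,u_0)\neq 0$ (if $\bar I(u_0)<0$) or $C_2(\bar I,u_0)\neq 0$ (if $\bar I(u_0)>0$) — it does not give degree $1$, and in any case a single nonvanishing critical group at $u_0$ is not enough to make the coefficient-matching in $(1+t)Q(t)$ terminate in a contradiction without also knowing the full profile of $C_k(\bar I,0)$. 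Your honest remark that one would need a second-variation analysis or a secondary linking at $u_0$ is accurate; that is a real obstruction to your version of the argument. One further small slip: $u_1$ being a critical point of $\bar I$ already makes it a weak solution of \eqref{main3}, so Remark \ref{key_obs} is not needed here — that remark is only for upgrading solutions of \eqref{main3} to solutions of \eqref{main prob}, which is the content of Theorem \ref{main theorem}, not of this one.
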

\begin{proof}
	From the Theorem $\ref{existence_one_soln}$ it can be assumed that there is one nontrivial solution to \eqref{main3}. Let us further assume that there is exactly one nontrivial solution to \eqref{main3}.  We at first show that $H_k(Z,\bar{I}^{-a})$ for all $k\geq 0$. Pick a $u\in\{v\in Z:\|v\|=1\}=\partial B^{\infty}$, where $B^{\infty}=\{v\in Z:\|v\|\leq 1\}$. We make use of the equation \eqref{ineq1} which explains that for small enough $t>0$ we have $\bar{I}(tu)>0$. The functional $\bar{I}$ being $C^1$ in $Z\setminus\{0\}$ indicates that $\bar{I}'(tu)>0$ for this small $t>0$. Also from \eqref{der_neg} and in combination with the fact that the functional is superlinear, we have that for large enough $t>0$, $\bar{I}'(tu)<0$. Thus, there exists a unique $t(u)$ such that $\bar{I}'(t(u)u)=0$ since due to our assumption that there exists exactly one nontrivial solution. We can thus say that there exists a $C^1$-function $T:Z\setminus\{0\}\rightarrow\mathbb{R}^{+}$ defined by $u\mapsto t(u)$. We now define a standard  deformation retract $\eta$ of $Z\setminus B_r(0)$ into $\bar{I}^{-a}$ as follows (refer Definition \ref{deformation}). 
	\[\eta(s,u)=\begin{cases}
	(1-s)u+sT\left(\frac{u}{\|u\|}\right)\frac{u}{\|u\|}, & \|u\|\geq \nu, \bar{I}(u)\geq -a\\
	u, & \bar{I}(u)\leq -a.
	\end{cases}\] 
	It is not difficult to see that $\eta$ is a $C^1$ function over $[0,1]\times Z\setminus B_r(0)$. On using the map $\delta(s,u)=\frac{u}{\|u\|}$, for $u\in Z\setminus B_r(0)$ we claim that $H_k(Z,Z\setminus B_r(0))=H_k(B^{\infty},S^{\infty})$ for all $k\geq 0$. This is because, $H_k(B^{\infty},S^{\infty})\cong H_k(*,0)$. From elementary computation of homology groups with two $0$-dimensional simplices it is easy to see that $H_k(*,0)=\{0\}$ for each $k\geq 0$. 
	A result in \cite{pererabook} tells us that
	\[C_n(\bar{I},0)=\begin{cases}
	\mathbb{R}, & \text{if}~m(0)=N\\
	0, & \text{otherwise}.
	\end{cases}\] 
	Clearly, $m(0)\geq 2$. Therefore, from the Morse relation in the Remark $\ref{rem1}-4$ and the result above taken from \cite{pererabook}, we have for $a>0$
	\begin{align}
	\sum_{u\in K_{\bar{I},[-a,\infty)}}\sum_{k\geq 0}dim C_k(\bar{I},u)t^k&=t^{m(0)}+p(t)
	\end{align}   
	where $m(0)$ is the Morse index of $0$, and $p(t)$ contains the rest of the powers of $t$ corresponding to the other critical points, if any. On further using the the Morse relation we obtain 
	\begin{align}
	t^{m(0)}+p(t)&=(1+t)Q_t.
	\end{align}  
	This is because the $H_k$s are all trivial groups. Hence, $Q_t$ either contains $t^{m(0)}$ or $t^{m(0)-1}$ or both. Thus, there exists at least two nontrivial $u\in K_{\bar{I},[-a,\infty)}$ with $2\leq m(0)\leq N+1$. 
\end{proof}\begin{theorem}\label{subcrit}
	Let $\Omega$ be as above.Then, $\bar{I}$ has infinitely many critical points in $Z$. 
\end{theorem}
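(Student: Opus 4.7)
The plan is to exploit the $\mathbb{Z}_2$-symmetry of $\bar{I}$, together with the coercivity granted by the cut-off, to produce a sequence of negative critical values via a genus-based symmetric min-max scheme (Clark-type theorem). Since $f(x,t)=\lambda|t|^{-\gamma-1}t+\mu|t|^{2_s^*-2}t$ is odd in $t$, its primitive $\tilde F(x,t)$ is even in $t$, and $\xi(\|u\|)$ depends only on the norm, hence $\bar{I}(-u)=\bar{I}(u)$ and $\bar{I}(0)=0$. Moreover, for $\|u\|\geq 2l$ the cut-off kills the critical term, so
\begin{equation*}
\bar{I}(u)=\tfrac{1}{2}\|u\|^{2}-\tfrac{\lambda}{1-\gamma}\int_{\Omega}|u|^{1-\gamma}dx,
\end{equation*}
which, since $1-\gamma<2$ and $\|u\|_{1-\gamma}\leq C\|u\|$, is coercive and bounded below on $Z$.

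Next I would construct, for each $n\in\mathbb{N}$, an $n$-dimensional subspace $V_{n}\subset Z$ (for instance the span of the first $n$ eigenfunctions of $(-\Delta)^{s}$ or $(-\Delta)$ with Dirichlet data) on which the same computation as in \eqref{ineq0} applies: by the equivalence of all norms on the finite-dimensional $V_{n}$, one obtains constants $C_{n},C_{n}'>0$ such that for $u\in V_{n}$ with $\|u\|=\rho$ small,
\begin{equation*}
\bar{I}(u)\leq \tfrac{1}{2}\rho^{2}-\tfrac{\lambda}{1-\gamma}C_{n}\rho^{1-\gamma}-\tfrac{\mu}{2_{s}^{*}}C_{n}'\rho^{2_{s}^{*}}<0,
\end{equation*}
because the $\rho^{1-\gamma}$ term dominates as $\rho\downarrow 0$. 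Thus $V_{n}\cap\partial B_{\rho_{n}}(0)$ is a compact symmetric set of Krasnoselskii genus exactly $n$ on which $\bar{I}\leq -\alpha_{n}<0$.

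With these ingredients in place I would define, using the class $\Sigma_{k}=\{A\subset Z\setminus\{0\}:A=-A,\;A\text{ closed},\;\gamma(A)\geq k\}$, the min-max values
\begin{equation*}
c_{k}=\inf_{A\in\Sigma_{k}}\sup_{u\in A}\bar{I}(u),\qquad k\in\mathbb{N}.
\end{equation*}
The previous paragraph gives $c_{k}\leq -\alpha_{k}<0$, while coercivity bounds $c_{k}$ from below; hence $-\infty<c_{k}<0<c_{*}$ for every $k$, which places each $c_{k}$ in the range where the $(PS)_{c}$-condition holds. Standard genus-based deformation arguments (Clark's theorem) then yield that each $c_{k}$ is a critical value of $\bar{I}$, and moreover if some value $c=c_{k}=\cdots=c_{k+j}$ is achieved on a set of genus $\geq j+1$, it carries infinitely many critical points; in either case one produces a sequence $\{u_{k}\}\subset K_{\bar{I}}\setminus\{0\}$ with $\bar{I}(u_{k})=c_{k}\nearrow 0$.

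The principal obstacle is that $\bar{I}$ fails to be $C^{1}$ at the origin because of the singular term $\lambda|u|^{-\gamma-1}u$, so classical Clark-type theorems do not apply verbatim. I would handle this exactly as in Remark \ref{C1geq1} and the proof of Theorem \ref{existence_one_soln}: since each candidate critical value $c_{k}$ satisfies $c_{k}\leq -\alpha_{k}<0=\bar{I}(0)$, any deformation used in the min-max argument can be cut off away from a neighborhood of $0$ so that the flow evolves entirely in the open set $Z\setminus\overline{B_{\varepsilon}(0)}$, where $\bar{I}$ is genuinely $C^{1}$ and the pseudo-gradient machinery applies. This isolation-from-the-singularity step is the place where care is needed; once it is established, the symmetric deformation argument and the convergence $c_{k}\to 0^{-}$ give infinitely many distinct critical points of $\bar{I}$ in $Z$, completing the proof.
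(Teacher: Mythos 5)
Your proposal is correct in spirit but takes a genuinely different route from the paper. You reach for a Clark/Krasnoselskii-genus symmetric min-max scheme, exploiting the evenness of $\bar{I}$, the coercivity forced by the cut-off $\xi$, and the fact that on finite-dimensional subspaces the singular term $-\tfrac{\lambda}{1-\gamma}\int_\Omega|u|^{1-\gamma}$ dominates near the origin, yielding a monotone sequence of negative min-max levels $c_k\nearrow 0$ each lying strictly below $c_*$, where the $(PS)_c$-condition is available. The paper instead stays inside its Morse-theoretic framework: it writes the Morse relation
\begin{equation*}
\sum_{u\in K_{\bar{I},[-a,\infty)}}\sum_{k\geq 0}\dim C_k(\bar{I},u)\,t^k=\sum_{k\geq 0}\dim H_k(Z,\bar{I}^{-a})\,t^k+(1+t)Q_t,
\end{equation*}
uses $H_k(Z,\bar{I}^{-a})=0$ (established in the proof of Theorem \ref{twosoln}), pairs up the nonzero critical points via $u\mapsto -u$ so that the left side has the form $t^{m(0)}+2\sum\alpha_k t^k$, and then sets $t=1$ to obtain the parity identity $1+2A=2B$, which is impossible for finite sums. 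Your genus argument is arguably cleaner and more self-contained: it only ever works at negative levels, so the restriction $c<c_*$ for $(PS)_c$ is automatically respected, and it avoids the somewhat delicate bookkeeping of critical groups at a point ($u=0$) where $\bar{I}$ is not smooth. Conversely, the paper's argument is consistent with the algebraic-topological theme it sets up and requires no separate construction of the linking subspaces $V_n$. One small caveat in your write-up: the inequality $\bar{I}(u)\le\tfrac12\rho^2-\tfrac{\lambda}{1-\gamma}C_n\rho^{1-\gamma}-\tfrac{\mu}{2_s^*}C_n'\rho^{2_s^*}$ only holds as stated for $\mu>0$; since the theorem allows $\mu\in\mathbb{R}$, you should drop the last term (or replace $\mu$ by $|\mu|$ with the correct sign) and observe that the $\rho^{1-\gamma}$ term alone makes the right-hand side negative for small $\rho$. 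Both proofs share the same genuine subtlety—the lack of smoothness of $\bar{I}$ at $0$—and your proposed fix (run the deformation entirely in sublevel sets disjoint from the origin, since all the $c_k$ are negative while $\bar{I}(0)=0$) is the natural one and mirrors the paper's implicit reliance on the same fact.
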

\begin{proof}
	We appeal to the Morse theory again from which we obtain the following. 
	\begin{align}
	\sum_{u\in K_{\bar{I},[-a,\infty)}}\sum_{k\geq 0}dim C_k(\bar{I},u)t^k&=t^{m(0)}+2\sum_{k\geq 0}\alpha_k t^k.
	\end{align} 
	The factor $2$ is due to the fact that if $u$ is a critical point then $(-u)$ is also a critical point. $\alpha_k$s are nonnegative integers. As in the proof of the Theorem $\ref{twosoln}$, we have $H_k(Z,\bar{I}_{-a}^{\infty})=0$. Therefore, we have the following identity over $\mathbb{R}$.
	\begin{align}\label{iden1}
	t^{m(0)}+2\sum_{k\geq 0}\alpha_k t^k&=(1+t)Q_t.
	\end{align}
	In particular for $t=1$ we have $1+2A=2B$, where the series on the left of the identity in \eqref{iden1} for $t=1$ is denoted by $A$, and the term on the right of the same for $t=1$ is denoted by $B$. This is possible only when the sum is infinite as finite sum leads to a contradiction that there exists an odd and an even number that agree. Thus, there exists infinitely many solutions.
\end{proof}
\noindent The following is the main existence result for the problem $\eqref{main prob}$.
\begin{theorem}\label{main theorem}
	The problem $\eqref{main prob}$ admits infinitely many nontrivial solutions in $Z\cap L^\infty(\Omega)$.
	\begin{proof}
		According to Theorem $\ref{existence_one_soln}-\ref{subcrit}$, the problem $\eqref{main3}$ has infinitely many solutions in $Z\cap L^\infty(\Omega)$. With a suitable choice of $l$ and by Remark $\ref{positivity}$, we conclude our proof.
	\end{proof}
\end{theorem}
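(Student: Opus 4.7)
The plan is to reduce the result to the cut-off version \eqref{main3} already analysed in Subsections \ref{existence}--\ref{multiplicity}, and then to recover solutions of the original problem \eqref{main prob} by invoking Remark \ref{key_obs} once the cut-off height $l$ has been calibrated. The hard intellectual work has been done in Theorems \ref{existence_one_soln}, \ref{twosoln}, and \ref{subcrit}; what remains is essentially a bookkeeping argument that removes the truncation from the picture.

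First I would invoke Theorem \ref{subcrit} to obtain an infinite sequence $(u_n)\subset Z$ of distinct nontrivial critical points of the truncated functional $\bar I$. By definition, each $u_n$ is a weak solution of the cut-off problem \eqref{main3}, and by the regularity statement Theorem \ref{bddness'} in the Appendix each $u_n$ lies in $Z\cap L^{\infty}(\Omega)$. The invariance $I(u)=I(|u|)$ noted in Remark \ref{positivity}, together with the presence of the singular term $\lambda|u|^{-\gamma-1}u$, forces every nontrivial $u_n$ to be strictly positive almost everywhere in $\Omega$.

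The pivotal step is the passage back to the original problem. By Remark \ref{key_obs}, any $u_n$ satisfying $\|u_n\|\le l$ is also a weak solution of \eqref{main prob}, so the task is to select $l$ large enough that infinitely many of the $u_n$ fall below this threshold. Since the multiplicity output of Theorem \ref{subcrit} is valid for every admissible $l$, I would proceed by calibration: inspecting the a priori bounds produced in the $(PS)_c$ argument (which control $M$ in terms of $\lambda,\mu,\gamma$ and the critical value $c$) and in the deformation argument of Proposition \ref{Ckiszero}, one extracts a bound of the form $\|u_n\|\le C(\lambda,\mu,\gamma,\bar{\mathbb J})$ on any critical point whose energy lies in a prescribed compact range. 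Choosing $l$ above this threshold then makes $\xi(\|u_n\|)\equiv 1$ and, via Remark \ref{key_obs}, upgrades every such $u_n$ to a solution of \eqref{main prob}.

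The step I expect to be most delicate is ensuring that the infinite multiplicity survives this truncation removal in a non-trivial way, since the critical set of $\bar I$ depends on $l$. If the single-shot calibration above does not directly deliver infinitely many admissible $u_n$, I would fall back on a diagonal scheme over a sequence $l_k\nearrow\infty$: for each $k$ apply Theorem \ref{subcrit} to $\bar I_{l_k}$, pick a critical point with $\|u\|\le l_k$ that is distinct from the finitely many obtained at earlier stages, and iterate. Either route yields a countable family of distinct positive solutions of \eqref{main prob} in $Z\cap L^{\infty}(\Omega)$, and Remark \ref{positivity} together with Theorem \ref{bddness'} supplies the positivity and the $L^{\infty}$-bound to finish the proof.
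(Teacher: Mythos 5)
Your proposal takes essentially the same route as the paper: infinitely many critical points of $\bar I$ come from Theorems \ref{existence_one_soln}--\ref{subcrit}, the $L^{\infty}$-bound from Theorem \ref{bddness'}, positivity from Remark \ref{positivity}, and the passage back to \eqref{main prob} from Remark \ref{key_obs} once $l$ is calibrated. In fact your exposition is more careful than the paper's terse proof --- you correctly single out Remark \ref{key_obs} as the mechanism for removing the truncation (the paper cites Remark \ref{positivity} there, which addresses positivity but not the cut-off removal), and you rightly flag the calibration of $l$, compressed by the paper to the phrase ``with a suitable choice of $l$'' with no further justification, as the genuinely delicate step.
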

\section{Appendix}\label{boundedness}
\begin{theorem}\label{papathm}
	(Theorem $6.2.42$ of Papagiorgiou et al. \cite{papa0})	If $X$ is a Banach space, $I\in C^1(X)$, $I$ satisfies the $Ce$-condition, $K_I$ is finite with $0\in K_I$, and for some $k\in\mathbb{N}$ we have $C_k(I,0)\neq 0$, $C_k(I,\infty)=0$, then there exists a $u\in K_I$ such that
	$\bar(u)<0$, $C_{k-1}(I,u)\neq 0$ or $I(u)>0$, and $C_{k+1}(I,u)\neq 0$.
\end{theorem}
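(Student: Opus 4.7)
The plan is to chase the non-trivial class representing $C_k(I,0)$ through the filtration of $X$ by sublevel sets of $I$. Since $K_I$ is finite, I fix $a<\inf_{u\in K_I}I(u)$, $b>\sup_{u\in K_I}I(u)$, and $\varepsilon>0$ small enough that $(-\varepsilon,0)\cup(0,\varepsilon)$ contains no critical value of $I$. The Cerami condition together with the second deformation lemma yields $H_*(X,I^a)\cong C_*(I,\infty)$ and $H_*(X,I^b)=0$; the hypothesis gives $H_k(X,I^a)=0$. Gromoll-Meyer theory applied at the isolated critical value $0$ gives $H_k(I^\varepsilon,I^{-\varepsilon})\cong\bigoplus_{u\in K_I\cap I^{-1}(0)}C_k(I,u)$, so $C_k(I,0)\neq 0$ produces a non-zero class $\alpha\in H_k(I^\varepsilon,I^{-\varepsilon})$.

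Next I invoke the long exact sequence of the triple $(X,I^\varepsilon,I^{-\varepsilon})$,
$$\cdots\to H_{k+1}(X,I^\varepsilon)\xrightarrow{\,\partial\,}H_k(I^\varepsilon,I^{-\varepsilon})\xrightarrow{\,j_*\,}H_k(X,I^{-\varepsilon})\to\cdots,$$
and split into two exhaustive cases according to whether $j_*(\alpha)\neq 0$ or $\alpha\in\operatorname{im}\partial$.

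In Case A, $H_k(X,I^{-\varepsilon})\neq 0$. Applying the triple $(X,I^{-\varepsilon},I^a)$, the vanishing $H_k(X,I^a)=0$ forces an injection $H_k(X,I^{-\varepsilon})\hookrightarrow H_{k-1}(I^{-\varepsilon},I^a)$. Filtering $I^{-\varepsilon}$ by the finitely many negative critical values and applying the Morse inequality $\dim H_{k-1}(I^{-\varepsilon},I^a)\leq\sum_{u\in K_I,\,I(u)<0}\dim C_{k-1}(I,u)$ then forces some $u\in K_I$ with $I(u)<0$ and $C_{k-1}(I,u)\neq 0$. In Case B, $H_{k+1}(X,I^\varepsilon)\neq 0$; symmetrically, the triple $(X,I^b,I^\varepsilon)$ combined with $H_{k+1}(X,I^b)=0$ makes the map $H_{k+1}(I^b,I^\varepsilon)\to H_{k+1}(X,I^\varepsilon)$ surjective, so $H_{k+1}(I^b,I^\varepsilon)\neq 0$, and the same Morse filtration argument on $(0,b]$ produces $u\in K_I$ with $I(u)>0$ and $C_{k+1}(I,u)\neq 0$.

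The main obstacle is not the homological chase, which is mechanical once the filtration is in place, but rather establishing the two deformation statements $H_*(X,I^b)=0$ and $H_*(X,I^a)\cong C_*(I,\infty)$ using only the Cerami condition (strictly weaker than Palais-Smale), since the global compactness must be extracted from Cerami alone via a suitable pseudo-gradient vector field. Both results are standard in abstract Morse theory but are the steps that genuinely consume the $C^1$ regularity and the $(Ce)$ hypothesis; once granted, the conclusion follows immediately from the two triples above and the finiteness of $K_I$.
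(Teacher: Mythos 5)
The statement is quoted in the paper's Appendix as Theorem 6.2.42 of Papageorgiou--R\u{a}dulescu--Repov\v{s} with no proof given, so there is no in-paper argument to compare against; your proposal reconstructs the standard proof from that reference. Your argument is correct: fixing $a<\inf I(K_I)$, $b>\sup I(K_I)$ and $\varepsilon>0$ isolating the critical level $0$, using the Cerami-based second deformation theorem to identify $H_*(X,I^a)$ with $C_*(I,\infty)$ and to kill $H_*(X,I^b)$, extracting a nonzero class in $H_k(I^\varepsilon,I^{-\varepsilon})$ from the Gromoll--Meyer decomposition, and then chasing it through the long exact sequences of the triples $(X,I^\varepsilon,I^{-\varepsilon})$, $(X,I^{-\varepsilon},I^a)$, $(X,I^b,I^\varepsilon)$ together with the Morse inequality is exactly the canonical route, and you have also correctly disambiguated the garbled conclusion in the paper's statement (read $\bar(u)<0$ as $I(u)<0$, with the disjunction grouping the two conjunctions). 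The only point worth tightening if this were written out in full is the remark that $C_*(I,\infty)\cong H_*(X,I^a)$ is the definition of the critical groups at infinity rather than a consequence of the deformation lemma; the deformation lemma is what makes that definition independent of the choice of $a$ below $\inf I(K_I)$.
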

\begin{theorem}\label{bddness'}
	Any solution to \eqref{main3} is in $L^{\infty}(\Omega)$.
\end{theorem}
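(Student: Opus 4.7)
The plan is to establish the $L^\infty$ bound by a Moser iteration adapted to the mixed local--nonlocal operator. By Remark~\ref{positivity} every weak solution $u$ of \eqref{main3} satisfies $u>0$ a.e., so the singular term $\lambda u^{-\gamma}$ is nonnegative and does not obstruct an upper bound on $u$; the essential task is to control the critical contribution $\mu\,\xi(\|u\|)\,u^{2_s^*-1}$. A useful simplifying observation is that for a \emph{fixed} solution $u$ the factor $\xi(\|u\|)$ is a fixed constant in $[0,1]$, so we may treat the reaction as $\lambda u^{-\gamma}+c\,u^{2_s^*-1}$ with $|c|\leq|\mu|$ and essentially run a Brezis--Kato style scheme with critical growth.

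First, for $k>0$ set $u_k=\min(u,k)$ and for $\beta\geq 1$ test the weak formulation against $\varphi=u\,u_k^{2(\beta-1)}\in Z$. The local piece contributes $(1/\beta^{2})\int|\nabla(u u_k^{\beta-1})|^{2}$ via the chain rule, while the nonlocal piece contributes at least $(1/\beta^{2})[u u_k^{\beta-1}]_{s,2}^{2}$ through the standard pointwise convexity inequality
\[
(u(x)-u(y))\bigl(uu_k^{2(\beta-1)}(x)-uu_k^{2(\beta-1)}(y)\bigr)\geq\frac{1}{\beta^{2}}\bigl(uu_k^{\beta-1}(x)-uu_k^{\beta-1}(y)\bigr)^{2}.
\]
Summing and invoking the Sobolev embedding $Z\hookrightarrow L^{2_s^*}(\Omega)$ (Theorem~\ref{poin}) controls $\|u u_k^{\beta-1}\|_{L^{2_s^*}}^{2}$ by a $\beta$-polynomial multiple of the integrals of $u^{1-\gamma}u_k^{2(\beta-1)}$ and $u^{2_s^*-2}(u u_k^{\beta-1})^{2}$.

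Next, to absorb the critical contribution I split $\Omega=\{u\leq M\}\cup\{u>M\}$. On the upper set, H\"older's inequality with exponents $(N/2s,2_s^*/2)$ produces the prefactor $\bigl(\int_{\{u>M\}}u^{2_s^*}\bigr)^{2s/N}$ times $\|u u_k^{\beta-1}\|_{L^{2_s^*}}^{2}$; since $u\in L^{2_s^*}(\Omega)$, this prefactor is made as small as needed by choosing $M$ large (depending only on $\|u\|_{L^{2_s^*}}$ and $|\mu|$, not on $\beta$), and the term is absorbed into the left-hand side. On the lower set the critical integrand is controlled by $M^{2_s^*-2}u_k^{2\beta}$, hence by $\|u\|_{L^{2\beta}}^{2\beta}$. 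For the singular piece, $u_k\leq u$ gives $u^{1-\gamma}u_k^{2(\beta-1)}\leq u^{2\beta-1-\gamma}$, and since $2\beta-1-\gamma<2\beta$ this is bounded via H\"older and Young by $|\Omega|+\|u\|_{L^{2\beta}}^{2\beta}$. Passing $k\to\infty$ by Fatou yields the Moser-type recursion $\|u\|_{L^{2_s^*\beta}}^{2\beta}\leq C(\beta)(1+\|u\|_{L^{2\beta}}^{2\beta})$ with $C(\beta)$ polynomial in $\beta$; iterating along $\beta_{n+1}=(2_s^*/2)\beta_n$ starting from $\beta_0=2_s^*/2$ and summing the resulting geometric logarithms produces the uniform bound $\|u\|_{L^\infty(\Omega)}<\infty$.

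The principal obstacle is the critical absorption step. Since the coefficient $\mu\,\xi(\|u\|)$ is not a priori small, a direct application of Sobolev does not absorb the critical term; the substitute is the $L^{2_s^*}$-smallness of $u$ on the super-level set $\{u>M\}$. It is crucial that the threshold $M$ be chosen once, uniformly in $\beta$, so that $C(\beta)$ stays polynomial and the Moser scheme converges. A secondary technical point is admissibility of the test function $u u_k^{2(\beta-1)}$ in $Z$, which holds because $u_k$ is bounded and the map in question is a truncated power of $u$, hence Lipschitz; the usual chain-rule/approximation argument justifies its use in both the local and fractional parts of the equation.
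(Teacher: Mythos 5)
Your proposal and the paper both run a Moser-type iteration, so at the highest level the route is the same, but your execution differs in two substantive ways that actually make your version more robust than the paper's sketch. The paper tests directly with $u^p$ on $\Omega'=\{u>1\}$, estimates the resulting critical integral by H\"older as $C''\|u\|_{\beta^*}^{2_s^*}\|u^p\|_{\mathfrak t}$ with $\mathfrak t=\beta^*/(\beta^*-2_s^*)$, and then absorbs $\|u\|_{\beta^*}^{2_s^*}$ into the iteration constant $C$. This tacitly requires $u\in L^{\beta^*}$ for some $\beta^*>2_s^*$ at the start of the iteration, which is exactly the integrability one does not have a priori in a critical problem, and the paper never addresses how the iteration is ignited. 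You, instead, use truncated test functions $u\,u_k^{2(\beta-1)}$ (standard to guarantee admissibility in $Z$) together with the pointwise convexity inequality for the nonlocal part, and you handle the critical term by the Brezis--Kato superlevel-set splitting $\Omega=\{u\le M\}\cup\{u>M\}$, choosing $M$ once (independently of $\beta$) so that $\bigl(\int_{\{u>M\}}u^{2_s^*}\bigr)^{2s/N}$ is small enough to be absorbed by Sobolev on the left. That step is precisely what makes the iteration start from the known $L^{2_s^*}$ integrability, and it keeps $C(\beta)$ polynomial in $\beta$, which the summability of the iteration requires. You also treat the singular term explicitly via $u_k\le u$ and Young's inequality, whereas the paper folds it into the constant $a=\max\{\lambda,|\mu|\}$ after restricting to $\{u>1\}$. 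In short: same iteration family, but your decomposition supplies the critical-absorption step that the paper glosses over, and your truncation handles test-function admissibility more carefully; the paper's version is shorter but leaves the ignition of the bootstrap unjustified.
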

\begin{proof} 
		The argument sketched here is a standard one, and hence we shall  only show that an improvement in integrability is possible up to $L^{\infty}$ assuming an integrability of certain order, say $p>1$. The boundedness follows from a {\it bootstrap} argument. Without loss of generality we can consider the set $\Omega'=\{x\in\Omega:u(x)>1\}$, and thus by the positivity of a fixed solution (refer Remark $\ref{positivity}$), say, $u$ we have $u=u^+>0$ a.e. in $\Omega$. Let $u\in L^{p}(\Omega)$ for $p>1$. Let $a=\max\{\lambda,|\mu|\}$. On testing with $u^p$ to obtain the following:	
		\begin{align}\label{bddness''}
		\begin{split}
		C\|u^{\frac{p+1}{2}}\|_{2_s^*}^{2}&\leq\|u^{\frac{p+1}{2}}\|\\
		&\leq\left(\lambda\int_{\Omega'}|u|^{p-\gamma}dx+\mu\int_{\Omega'}|u|^{2_s^*-1+p}dx\right)\frac{(p+1)^2}{4p}\\
		&\leq a\left(\int_{\Omega'}|u|^{p}(1+|u|^{2_s^*-1})dx\right)\frac{(p+1)^2}{4p};~\text{since in}~\Omega'~\text{we have}~u>1\\
		&\leq 2a\left(\int_{\Omega'}|u|^{p}|u|^{2_s^*}dx\right)\frac{(p+1)^2}{4p}\\
		&\leq 2a C''\|u\|_{\beta^*}^{2_s^*}\|u^p\|_{\mathfrak{t}};~\text{since by using H\"{o}lder's inequality}.
		\end{split}
		\end{align}
		Here, $\mathfrak{t}=\frac{\beta^*}{\beta^*-2_s^*}$ for some $\beta^*>1$, $\mathfrak{t}^*=\frac{tN}{N-ts}<2_{s}^*$. Thus, we also have
	\begin{align}\label{bddness1}
		\begin{split}
		C'\|u^{\frac{p}{2}}\|_{\beta^*}^2&\leq C'\|u^{\frac{p+1}{2}}\|_{\beta^*}^2\leq \|u^{\frac{p+1}{2}}\|_{2_s^*}^2.
		\end{split}
		\end{align}
		From the story so far, we know the following
		\begin{align}
		C'\|u^{\frac{p}{2}}\|_{\beta^*}^2&\leq 2a C''\|u\|_{\beta^*}^{2_s^*}\|u^p\|_{\mathfrak{t}}.
		\end{align}
		For the fixed $\beta^*>1$, we set $\eta=\frac{\beta^*}{2\mathfrak{t}}>1$ for a suitable choice of $t$, and $\tau=\mathfrak{t}p$ to get
		\begin{align}\label{moser1}
		\|u\|_{\eta\tau}&\leq C^{\frac{\mathfrak{t}}{\tau}}\|u\|_{\tau};~\text{where}~C=2a  C''\|u\|_{\beta^*}^{\alpha^+}~\text{is a fixed  quantity for a fixed solution}~u.\\
		\end{align}
		Let us now iterate with $\tau_0=\mathfrak{t}$, $\tau_{n+1}=\eta\tau_n=\eta^{n+1}\mathfrak{t}$. After $n$ iterations, the inequality $\eqref{moser1}$ yields
		\begin{align}\label{moser2}
		\|u\|_{\tau_{n+1}}&\leq C^{\sum\limits_{i=0}^{n}\frac{\mathfrak{t}}{\tau_i}}\prod\limits_{i=0}^{n}\left(\frac{\tau_i}{\mathfrak{t}}\right)^{\frac{\mathfrak{t}}{\tau_i}}\|u\|_{\mathfrak{t}}.
		\end{align}
		By using $\eta>1$ and the method of iteration, i.e. $\tau_0=\mathfrak{t}$, $\tau_{n+1}=\eta\tau_n=\eta^{n+1}\mathfrak{t}$,
		we have $$\sum\limits_{i=0}^{\infty}\frac{\mathfrak{t}}{\tau_i}=\sum\limits_{i=0}^{\infty}\frac{1}{\eta^i}=\frac{\eta}{\eta-1},$$ and
		$$\prod\limits_{i=0}^{\infty}\left(\frac{\tau_i}{\mathfrak{t}}\right)^{\frac{\mathfrak{t}}{\tau_i}}=\eta^{\frac{\eta^2}{(\eta-1)^2}}.$$
		Hence, on passing the limit $n\rightarrow\infty$ in \eqref{moser2}, we end up getting
		\begin{align}\label{moser3}
		\|u\|_{\infty}&\leq C^{\frac{\eta}{\eta-1}}\eta^{\frac{\eta^2}{(\eta-1)^2}}\|u\|_{\mathfrak{t}}.
		\end{align}
		Thus, $u\in L^{\infty}(\Omega)$.
\end{proof}
\section{References}


\end{document}